\definecolor{Blue}{rgb}{0.,0.,1}
\newcommand{\lm}{\lim\limits_{\e\rightarrow 0}}
\newcommand{\R}{\mathbb R}
\newcommand{\N}{\mathbb N}
\newcommand{\cF}{\mathcal{F}}
\newcommand{\cL}{\mathcal{L}}
\newcommand{\G}{{\mathbb G}_{d,n}}
\newcommand{\V}{\|V\|}
\newtheorem{theo}{Theorem}[section]
\newtheorem*{theo*}{Theorem}
\newtheorem{prop}[theo]{Proposition}
\newtheorem{lemma}[theo]{Lemma}
\newtheorem{dfn}[theo]{Definition}
\newtheorem*{dfn*}{Definition}
\newtheorem{cor}[theo]{Corollary}
\newtheoremstyle{rmdotless}{}{}{\upshape}{}{\bfseries}{.}{0.5em}{}
\theoremstyle{rmdotless}
\newtheorem{remk}[theo]{Remark}
\DeclareMathOperator*{\supp}{spt}
\renewcommand{\phi}{\varphi}
\renewcommand{\epsilon}{\varepsilon}
\newcommand{\e}{\epsilon}
\renewcommand{\G}{G_{d,n}}
\def\L{\mathcal{L}} 
\def\ds{\displaystyle}
\title{}
\date{}
\begin{document}

\begin{center}
\rule{\textwidth}{0.5pt}
\vspace{0.2cm}
{\Large \textbf{Approximations of the mean curvature, and the Buet-Rumpf approximate mean curvature flow}}
\vspace{0.5cm}
\rule{\textwidth}{0.5pt}
\end{center}
\begin{center}
 {\bf Abdelmouksit Sagueni}\\
Institut Camille Jordan\\
{\it sagueni@math.univ-lyon1.fr}
\end{center}

\vspace{0.5cm}

\begin{abstract}
The aim of this paper is to generalize the work of B. Buet and M. Rumpf on some definition of the approximate mean curvature vector for varifolds, and its associated mean curvature motions for points clouds. We propose a generalization of the definition of the approximate mean curvature vector in two terms: in terms of linear operators and in terms of regularity of the varifold. We then extend the results to the approximate second fundamental form. Finally, we prove some additional comparison principles satisfied by the motion of points cloud by mean curvature (in the discrete and the continuous cases).

\vspace{0.5cm}
{\bf Key words:} Geometric measure theory; Varifolds; Mean curvature; Mean curvature flow; Second fundamental form; Brakke flow.
\end{abstract}

\tableofcontents

\newpage

\section{Introduction}
The mean curvature flow is an emblematic geometric flow, defined for smooth surfaces, allowing to reduce the area in the fastest possible way. One of its remarkable applications, in the discrete setting, is image denoising. Several approximations of the mean curvature and the mean curvature flow were proposed (ex: \cite{yang2007direct,dz1,feng,merigot2010voronoi,blm1,br}).\\

In this paper we generalize some results of \cite{br} on the approximation of the mean curvature vector and on the comparison principles satisfied by the motion of points cloud by approximate mean curvature. We work in the varifold setting, a varifold is a Radon measure on $\R^n \times \G$, for more context, see \cite{menne2017concept} for a simple and concise introduction, and \cite{simon, all, blm1} for more details.\\

We recall the definition of the approximate mean curvature w.r.t linear operators introduced  in \cite{br} after \cite{blm1}. For two  non-negative smooth real valued functions $\rho$ and $\xi$ defined on $\R^+$ and supported on $[0,1]$, a scale of smoothing $\e \in(0,1)$, we define for any varifold $V\in V_d(\R^n)$:
\begin{equation}\label{comp:def_mc}
H_{\rho,\xi,\e}^{\Pi,V}(x):= -\frac{C_{\xi}}{C_{\rho}} \frac{\e^{-1} \displaystyle\int_{\R^n} \rho'\left( \frac{|y-x|}{\e}\right) \frac{\Pi_y(y-x)}{|y-x|}d\V(y)}{ \xi_{\e}\ast\V(x)}
\end{equation}
whenever $\xi_{\e}\ast\V(x)>0$, where $C_{\xi}$ and $ C_{\rho}$ are normalization constants, and $\Pi_y$ is a linear operator for any $y\in \supp \V$. Denote by $H(x,V)$ the mean curvature vector of $V$,  It is known from \cite[Theorem 4.3]{blm1} that in case $V$ is the  varifold associated to a $C^2$ submanifold (more generally, $V$ is rectifiable) $\lm H_{\rho,\xi,\e}^{T_{\cdot}V,V}(x) = H(x,V)$.
\vspace{0.2cm}

\noindent {\bf Question:} for what choice of $\Pi$, do we have $\lm H_{\rho,\xi,\e}^{\Pi,V}(x) = H(x,V)$? 
\vspace{0.2cm}

Our main results on the approximation of the mean curvature are Proposition \ref{propbr3.3_gen1} and Proposition \ref{propbr3.3_gen2}. They consist of generalizations of \cite[Prop. 3.3]{br} in terms of the choice of the linear operator and the regularity of the varifold. Similarly to the original result of \cite[Proposition 3.3]{br} we do not provide an exact convergence rate, as it would require a finer study of the geometry of the submanifold (in the $C^2$ case) which is far from the subject of this work.

\noindent In the same spirit we generalize in section \ref{sec:sff_app} the definition of the approximate second fundamental form defined in \cite{blm2} (after \cite{hut1}) in terms of linear operators and regularity.
%

%
\vspace{0.2cm}

In the second part, we will investigate the comparison principles satisfied by the the motion of points cloud varifolds introduced in \cite{br}. In \cite{br}, Buet and Rumpf introduced several continuous and discrete schemes expressing the evolution of a points cloud varifold by approximate mean curvature, each corresponds to  a certain choice of the linear operator and a certain definition of the mass and the projectors. The authors proved the sphere barrier to internal varifolds for the special case where $\Pi =2 {\rm Id}$ and the kernels $\rho$ and $\xi$ satisfy the natural kernel pair property, namely:
\begin{equation}\label{nkpcondition}
 -s\rho'(s)=n\xi(s) \quad s \in \R^+.
\end{equation}
\noindent The starting point of their work on the comparison principles is the following theorem by Brakke.

\begin{theo}(Sphere comparison principles, \cite[Chapter 3]{brakke} )\\
\noindent Let $V_0 \in V_d(\R^n)$, $(V(t))_{t\geq0}$ a Brakke flow starting from $V_0$. The following principles holds
\begin{enumerate}
\item The sphere barrier to external varifolds principle:
\begin{equation*}
 \supp \|V_0 \| \subset B(a,R) \,\, \implies \,\, \supp \|V(t) \| \subset B(a,\sqrt{R^2-2dt}), \quad 
 \,\, \forall (a,R) \in \R^n \times \R^+ \, \text{and} \, t \in [0,R^2/2d].
\end{equation*}

\item The sphere barrier to internal varifolds principle:
\begin{equation*}
\supp \|V_0 \| \subset B(a,R)^c \,\, \implies \,\, \supp \|V(t) \| \subset B(a,\sqrt{R^2-2dt})^c 
\quad \\, \forall (a,R) \in \R^n \times \R^+ \, \text{and} \, t \in [0,R^2/2d].
\end{equation*}
\end{enumerate}
\end{theo}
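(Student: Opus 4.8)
\emph{Proof sketch.} The plan is to derive both barriers directly from the Brakke inequality, testing it against a power of the signed ``distance'' to the shrinking sphere $\{|x-a|^2=R^2-2dt\}$. The guiding idea is that this sphere is itself evolving by mean curvature, so the right comparison function is one that is transported by the flow; raising it to a power makes the test function $C^2$ while keeping its zero set equal to the relevant closed ball.

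Concretely, recall that for a Brakke flow $(V(t))_{t\ge0}$ and a nonnegative $\phi\in C^2_c(\R^n\times[0,\infty))$ one has, for $0\le t_1\le t_2$,
\[
\|V(t_2)\|(\phi(\cdot,t_2))-\|V(t_1)\|(\phi(\cdot,t_1))\le\int_{t_1}^{t_2}\!\!\int\big(\partial_t\phi+\nabla\phi\cdot H(\cdot,V(t))-\phi\,|H(\cdot,V(t))|^2\big)\,d\|V(t)\|\,dt,
\]
and that for a.e.\ $t$ the varifold $V(t)$ is rectifiable with $H(\cdot,V(t))\in L^2(\|V(t)\|)$, so the first variation identity gives $\int\nabla\phi\cdot H(\cdot,V(t))\,d\|V(t)\|=-\int\mathrm{tr}\big(P_{T_xV(t)}D^2_x\phi\big)\,d\|V(t)\|$. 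For the external barrier I would set $w(x,t)=|x-a|^2+2dt-R^2$ and $\phi=g(w)$ with $g(s)=s_+^3$ (so $g\ge0$, $g''\ge0$); a short computation gives $\partial_t\phi=2d\,g'(w)$ and $\mathrm{tr}(P_{T_xV}D^2_x\phi)=2d\,g'(w)+4g''(w)\,|P_{T_xV}(x-a)|^2$, so that after substituting the first variation identity the two drift terms $2d\,g'(w)$ cancel and the full Brakke integrand reduces to $-4g''(w)\,|P_{T_xV}(x-a)|^2-\phi\,|H|^2\le0$ for a.e.\ $t$. Since $\supp\|V_0\|\subset B(a,R)=\{w(\cdot,0)<0\}$ we have $\|V_0\|(\phi(\cdot,0))=0$, hence $\|V(t)\|(\phi(\cdot,t))=0$ for every $t\ge0$; in particular, for $t\le R^2/2d$, $\supp\|V(t)\|\subset\{w(\cdot,t)\le0\}=\overline{B(a,\sqrt{R^2-2dt})}$. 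A short extra step---$\supp\|V_0\|$ is closed and bounded, hence compact, hence contained in $B(a,R')$ for some $R'<R$, and rerunning the argument with $R'$ yields a strictly smaller closed ball---then gives the open ball in the statement. For the internal barrier I would repeat the computation with $w(x,t)=R^2-|x-a|^2-2dt$ and the same $\phi=g(w)$; this $\phi(\cdot,t)$ is automatically compactly supported, its support lying in $\overline{B(a,\sqrt{R^2-2dt})}$, the drift terms cancel as before, and since $\supp\|V_0\|\subset B(a,R)^c=\{w(\cdot,0)\le0\}$ one obtains $\|V_0\|(\phi(\cdot,0))=0$ and hence $\supp\|V(t)\|\subset\{w(\cdot,t)\le0\}=B(a,\sqrt{R^2-2dt})^c$ for $t\le R^2/2d$.

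The algebraic core of this plan---the exact cancellation of the drift term $2d\,g'(w)$ against the tangential-Hessian term $\mathrm{tr}(P_{T_xV}D^2_x\phi)$, which is just the statement that concentric spheres shrink by mean curvature---is immediate. The hard part is the surrounding analysis: legitimately applying the Brakke inequality to these time-dependent, merely $C^2$ test functions (one must mollify $g$, and in the external case control the error introduced by a spatial cut-off, which requires ruling out mass escaping to infinity), and coping with the fact that the Brakke inequality and the first variation identity hold only for a.e.\ $t$ whereas $t\mapsto\|V(t)\|(\phi(\cdot,t))$ is in general only upper semicontinuous in a suitable sense. These technical points are exactly those handled in \cite[Chapter~3]{brakke}, from which the statement is quoted.
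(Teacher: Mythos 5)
The paper quotes this theorem from \cite[Chapter 3]{brakke} and does not supply a proof of its own, so there is no argument in the paper to compare yours against; I will therefore assess the sketch on its own merits. Your plan is the standard avoidance/maximum-principle argument for Brakke flows, and its algebraic heart is right: with $\phi=g(w)$, $g(s)=s_+^3$, and $w(x,t)=|x-a|^2+2dt-R^2$ (resp.\ its negative), the chain rule gives $\partial_t\phi=\pm 2d\,g'(w)$ and $\mathrm{tr}\bigl(P_{T_xV}D^2_x\phi\bigr)=\pm 2d\,g'(w)+4g''(w)\,|P_{T_xV}(x-a)|^2$, so after replacing $\int\nabla\phi\cdot H\,d\|V\|$ by $-\int\mathrm{tr}\bigl(P_{T_xV}D^2_x\phi\bigr)\,d\|V\|$ via the first-variation identity the drift terms cancel and the Brakke integrand reduces to $-4g''(w)|P_{T_xV}(x-a)|^2-\phi|H|^2\le 0$. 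The sign reasoning, the initial condition $\|V_0\|(\phi(\cdot,0))=0$, and the conclusions $\supp\|V(t)\|\subset\{w(\cdot,t)\le 0\}$ are all correct, and you have identified the two genuine technical hurdles: the external-barrier test function is not compactly supported (so one must truncate and control mass at infinity), and the Brakke inequality and first-variation identity hold only for a.e.\ $t$. Two small remarks: $g(s)=s_+^3$ is already $C^2$ (with $g''(s)=6s_+$ continuous), so no mollification of $g$ is needed if the Brakke inequality is stated for $C^2$ test functions; and for times past $(R')^2/(2d)$ in your shrink-the-radius refinement, the argument forces $\|V(t)\|\equiv 0$, which is consistent with the statement but worth a sentence. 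Overall the sketch is a sound rendition of the avoidance principle and matches the strategy used in Brakke's Chapter 3.
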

\noindent We prove in section \ref{sec:comp_ppl}, for the continuous and discrete motions of points clouds varifolds defined by Buet and Rumpf in \cite{br}, under the same assumptions of the authors, that:
\begin{itemize}
 \item The continuous motion satisfies the sphere barrier to external varifolds principle.
 \item Explicit schemes of the discrete motion satisfy the sphere barrier to internal varifolds principle.
 \item Implicit schemes of the discrete motion satisfy a "weak'' sphere barrier to external varifolds principle (weak in the sense that the time step depends on the distances between the points and the center of the sphere).
\end{itemize}

\section{Approximations of the mean curvature}
%
%

%

In this section we propose several definitions of the approximate mean curvature vector in light of \cite[Proposition 3.3]{br}. We prove the convergence of these approximations first in the $C^2$ case (Proposition \ref{propbr3.3_gen1}) and then for unit density varifolds with  certain integrability condition on the mean curvature (Proposition \ref{propbr3.3_gen2}).
 
\noindent For simplicity, we assume that $0$ is a generic point $\in \supp \| V\|$ for every varifold $V$ we deal with in this section. The approximate mean curvature at $0$ is given by:
\begin{equation*}
 H_{\rho,\xi,\epsilon}^{\Pi}(0,V)=-
\frac{C_{\xi}}{C_{\rho}\e} 
\frac{ \ds\int_{\R^{n}}\rho'\left(\frac{\vert y \vert}{\epsilon}\right)\Pi_y\left(\frac{y}{\vert y \vert}\right) d \V(y)} { \e^{d} \left( \ds\xi_{\e} \ast \V \right)(0) }
\end{equation*}
 where
\begin{equation}\label{def:cxi_crho}
C_{\rho}=d\omega_d \int_{0}^{1}\rho(t)t^{d-1}dt \quad  \text{and} \quad C_{\xi}=d\omega_d \int_{0}^{1}\xi(t)t^{d-1}dt 
\end{equation}
In the current section, we are concerned only with rectifiable varifolds (integral varifolds to be more precise), this allows the following simplifications of the approximate mean curvature vector formula. If $V$ is rectifiable, we can infer that: 
\begin{equation*}
  \lm \xi_{\e} \ast \V (x) = \lim\limits_{\e \rightarrow 0} \e^{-d} \int_{\R^{n}} \xi \left( \frac{\vert  y \vert}{\epsilon} \right) d\V(y) = \int_{T_0V} \xi(z) dz= C_{\xi}
\end{equation*}
(where $T_0V$ is the approximate tangent space of $V$ at $0$) and if we assume for simplicity that $C_{\rho} \equiv 1$, and denote  $H_{\rho,\xi,\epsilon}^{\Pi} := H_{\e}^{\Pi} $ we get the formula:
\begin{equation}\label{eq:simpleformula}
 \lm H_{\e}^{\Pi}(0,V) = -\lm {\e}^{-d-1}\int_{\R^{n}}\rho'\left(\frac{\vert y \vert}{\epsilon}\right)\Pi_y\left(\frac{y}{\vert y \vert}\right) d \V(y),
\end{equation}
this formula will be heavily used in the sequel for its simplicity. From now on, we denote $H_{\e}^{\Pi} := H_{\e}^{\Pi}(0,V)$ when there is no room for confusion.\\

\noindent{\bf Notation:} denote for simplicity $T=T_0V$ and $S=T_yV, \, y \in \supp \|V\|$.\\

The following result of \cite{br} is the starting point of our works.
\begin{prop}[Proposition 3.3, \cite{br}] \label{propbr3.3}  For a $C^2$ submanifold $M$ of dimension $d$ in $\R^n$ 
and $$ \Pi \in  \lbrace S , -2S^{\bot}  , 2{\rm Id}, T^{\bot}\circ S, -2T^{\bot}\circ S^{\bot}, 2T^{\bot} \rbrace$$
one has 
$$ \lim\limits_{\e \rightarrow 0} H_{\epsilon}^{\Pi} = H (0,M).$$
\end{prop}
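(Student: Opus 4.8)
The strategy is a local Taylor expansion of the $C^2$ submanifold $M$ near the generic point $0$, writing $M$ as a graph over the tangent space $T = T_0M$. After a rotation we may take $T = \R^d \times \{0\} \subset \R^n$ and write points of $M$ near $0$ as $y = (u, \psi(u))$ with $\psi : \R^d \to \R^{n-d}$ of class $C^2$, $\psi(0) = 0$, $D\psi(0) = 0$, and second-order expansion $\psi(u) = \tfrac12 Q(u,u) + o(|u|^2)$, where $Q$ is the (vector-valued) second fundamental form at $0$. The surface measure then satisfies $d\|V\|(y) = (1 + O(|u|^2))\, du$ on the ball of radius $\e$. I would substitute this parametrization into the simplified formula \eqref{eq:simpleformula}, change variables $u = \e v$ so the integral runs over the fixed unit ball, and expand each factor appearing inside $\rho'(|y|/\e)\,\Pi_y(y/|y|)$ to the order needed so that, after dividing by $\e^{d+1}$, the limit $\e \to 0$ is finite and nonzero. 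Concretely, $|y|^2 = |u|^2 + |\psi(u)|^2 = |u|^2(1 + O(|u|^2))$, so $|y|/\e = |v|(1 + O(\e^2))$ and $\rho'(|y|/\e) = \rho'(|v|) + O(\e^2)$; the unit vector $y/|y| = (u,\psi(u))/|y|$ has tangential part $v/|v| + O(\e^2)$ and normal part $\tfrac{\e}{2|v|}Q(v,v) + o(\e)$; and $\Pi_y$ (built from $S = T_yM$, $S^\perp$, $T^\perp$, $\mathrm{Id}$) differs from its value ``at $0$'' by $O(\e)$ since $y \mapsto T_yM$ is $C^1$ with derivative governed by $Q$.

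The key computation is then to collect the $O(\e^{d+1})$ term. Two things contribute at that order: (i) the leading tangential component $v/|v|$ of $y/|y|$, multiplied by the $O(\e)$ correction of $\Pi_y$ away from its value at $0$ (this is where the derivative of $y \mapsto T_yM$, i.e.\ the Weingarten map / second fundamental form, enters); and (ii) the $O(\e)$ normal component $\tfrac{\e}{2|v|}Q(v,v)$ of $y/|y|$, multiplied by $\Pi_0$ acting on normal vectors. For each of the six choices of $\Pi$ one has to check that these two contributions combine to give exactly $H(0,M) = \operatorname{tr} Q$ after integrating against $\rho'(|v|)$ over the unit ball and using the normalization $C_\rho \equiv 1$ together with the definitions \eqref{def:cxi_crho}. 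The isotropic angular integrals $\int_{B_1} \rho'(|v|)\, \tfrac{v_i v_j v_k v_l}{|v|} \, dv$ and $\int_{B_1}\rho'(|v|)\,\tfrac{v_iv_j}{|v|}\,dv$ reduce, by symmetry, to multiples of $\delta$-tensors, and the proportionality constants are pinned down precisely by the $C_\rho$, $C_\xi$ normalization so that the trace of $Q$ appears with coefficient $1$. I would treat $\Pi = 2\,\mathrm{Id}$ as the base case (where the tangential part of $\Pi_y$ is constant, so only contribution (ii) survives and the computation is cleanest), then handle $\Pi = S$ and $\Pi = -2S^\perp$, and finally note that $T^\perp \circ S$, $-2T^\perp\circ S^\perp$, $2T^\perp$ are obtained by composing with the \emph{fixed} projector $T^\perp = T_0M^\perp$, which kills the tangential-at-$0$ part and leaves the normal contributions unchanged, so their limits coincide with those of $S$, $-2S^\perp$, $2\,\mathrm{Id}$ respectively.

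The main obstacle I anticipate is bookkeeping the interplay in contribution (i): one must expand $\Pi_y$ to first order in $\e$ correctly, which requires the first-order expansion of the orthogonal projection onto $T_yM$ in terms of $D\psi(u) = Q(u,\cdot) + o(|u|)$, and then verify that the resulting tangential-times-derivative term and the normal term of (ii) do not cancel but rather add up to $\operatorname{tr} Q$ with the right sign for each $\Pi$ in the list. A secondary, more technical point is justifying that the error terms collected as $o(|u|^2)$ from the $C^2$ regularity of $\psi$, once divided by $\e^{d+1}$ and integrated over $B_\e$, indeed vanish as $\e \to 0$; this is where the only-$C^2$ hypothesis forces us to argue via uniform continuity of $D^2\psi$ rather than a genuine third-order term, and hence why no explicit convergence rate is claimed.
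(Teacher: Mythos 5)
Your proposal is correct, and it takes a genuinely different route from the paper's proof. The paper factors the argument through two auxiliary lemmas based on the height-excess and tilt-excess decay estimates of \cite{menne08} (to reduce to an integral in $z \in T$ against Lebesgue measure on a disk), then invokes the known limit $\lim_{\e\to0} H_\e^{S} = H(0,M)$ from \cite[Theorem 4.3]{blm1} as a black box, uses the orthogonality $H^\perp = H$ for integral varifolds, proves $\lim_{\e\to0} H_\e^{T^\perp\circ S} = \lim_{\e\to0} H_\e^{2T^\perp}$ via the Taylor identity $|(Df(z))(z) - 2f(z)| = o(|z|^2)$, and finally obtains the remaining operators from linearity and the identities $2\,\mathrm{Id} = 2T^\perp + 2T$, $-2S^\perp = 2S - 2\,\mathrm{Id}$. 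You instead propose a self-contained direct computation: graph parametrization, rescaling $u = \e v$, second-order expansion of $\psi$, $y/|y|$ and $T_yM$, and evaluation of the isotropic angular integrals. This is closer in spirit to the original \cite{br} argument and avoids the excess-decay machinery entirely, at the cost of more bookkeeping; the price the paper pays for its extra machinery is bought back in Proposition \ref{propbr3.3_gen2}, where the same lemmas allow the result to generalize beyond $C^2$. One detail you should state explicitly rather than leave implicit: after rescaling, the candidate $O(\e^{-1})$ term is $\e^{-1}\int_{B_1}\rho'(|v|)\,\Pi_0(v/|v|,0)\,dv$, and one must note that this vanishes identically because $\rho'(|v|)$ is even in $v$ while $\Pi_0(v/|v|,0)$ is odd (this is exactly Step~1 of the paper, $\lim_{\e\to0}H_\e^T = 0$); only then does it make sense to ``collect the $O(\e^{d+1})$ term.'' The factor-of-two bookkeeping you flag is also exactly right: $\psi(u) = \tfrac12 Q(u,u) + o(|u|^2)$ carries a $\tfrac12$ that the $2$ in $2\,\mathrm{Id}$ and $2T^\perp$ cancels, whereas $D\psi(u) = Q(u,\cdot) + o(|u|)$ carries no $\tfrac12$, which is why $S$ and $T^\perp\circ S$ need no prefactor.
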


We state its generalization in terms of linear operators:
\begin{prop}[Generalization in terms of linear operators]\label{propbr3.3_gen1}
For a $C^2$ submanifold $M$ of dimension $d$ in $\R^n$ and $\Pi$ in the set: 
$$ \lbrace 2S^{\bot} \circ T^{\bot}, S\circ T, -S^{\bot} \circ T \rbrace $$
we have $ \lim\limits_{\e \rightarrow 0} H_{\epsilon}^{\Pi} = H(0,M).$
Moreover, $\lm H_{\e}^{\Pi}=0$ for $\Pi$ in the set: 
\begin{equation*}
\lbrace T, T\circ S, S\circ T^{\bot}, T\circ S^{\bot} \rbrace.
\end{equation*}
\end{prop}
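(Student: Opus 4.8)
The plan is to avoid re-running the Taylor expansion of \cite[Prop.~3.3]{br} separately for each new operator, and instead to exploit the \emph{linearity of $\Pi\mapsto H_{\e}^{\Pi}$} together with Proposition \ref{propbr3.3}: this reduces all seven new limits to the ones already known plus a single elementary estimate. For fixed $\e$ and fixed $V$, formula \eqref{eq:simpleformula} shows that $(\Pi_y)_y\mapsto H_{\e}^{\Pi}(0,V)$ is $\R$-linear; hence $\lambda(\Pi):=\lm H_{\e}^{\Pi}$, wherever it is defined, is linear, and if $\lambda(\Pi_1)$ and $\lambda(\Pi_2)$ exist then so does $\lambda(a\Pi_1+b\Pi_2)=a\lambda(\Pi_1)+b\lambda(\Pi_2)$ (and, crucially, the relevant limits will exist \emph{because} the others do). From Proposition \ref{propbr3.3}, writing $H:=H(0,M)$, I read off $\lambda(S)=\lambda(2{\rm Id})=\lambda(2T^{\bot})=\lambda(T^{\bot}\!\circ S)=H$ and $\lambda(-2T^{\bot}\!\circ S^{\bot})=H$, hence $\lambda({\rm Id})=\lambda(T^{\bot})=\tfrac12H$, $\lambda(S^{\bot})=\lambda({\rm Id})-\lambda(S)=-\tfrac12H$, $\lambda(T^{\bot}\!\circ S)=H$ and $\lambda(T^{\bot}\!\circ S^{\bot})=-\tfrac12H$.

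Next I would use the pointwise resolutions of the identity $T+T^{\bot}={\rm Id}=S+S^{\bot}$, composed with ${\rm Id}$ on a fixed side, to get the operator identities $S\circ T+S\circ T^{\bot}=S$, $T\circ S+T^{\bot}\!\circ S=S$, $T\circ S^{\bot}+T^{\bot}\!\circ S^{\bot}=S^{\bot}$, $S\circ T+S^{\bot}\!\circ T=T$, $S\circ T^{\bot}+S^{\bot}\!\circ T^{\bot}=T^{\bot}$, all valid at every $y$. Applying $\lambda$ and the values above gives at once $\lambda(T)=\lambda({\rm Id})-\lambda(T^{\bot})=0$, $\lambda(T\circ S)=\lambda(S)-\lambda(T^{\bot}\!\circ S)=0$, and $\lambda(T\circ S^{\bot})=\lambda(S^{\bot})-\lambda(T^{\bot}\!\circ S^{\bot})=0$, which is the second list of the Proposition except for the entry $S\circ T^{\bot}$. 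Moreover, once $\lambda(S\circ T^{\bot})$ is known, $\lambda(S\circ T)=\lambda(S)-\lambda(S\circ T^{\bot})$, $\lambda(S^{\bot}\!\circ T)=\lambda(T)-\lambda(S\circ T)$ and $\lambda(S^{\bot}\!\circ T^{\bot})=\lambda(T^{\bot})-\lambda(S\circ T^{\bot})$, so the entire statement is reduced to the single claim $\lambda(S\circ T^{\bot})=0$.

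For that last claim I would argue by a crude size bound, which is the only place the $C^2$ hypothesis is really used. Parametrizing $M$ near $0$ as the graph $\{v+\psi(v):v\in U\}$ of a $C^2$ map $\psi\colon U\subset T_0M\to(T_0M)^{\bot}$ with $\psi(0)=0$ and $D\psi(0)=0$, one has $\mathrm{dist}(y,T_0M)=|\psi(v)|=O(|y|^2)$ and $|y|\sim|v|$ as $y=v+\psi(v)\to0$ on $M$; and, $M$ being $C^2$, the map $y\mapsto P_{T_yM}$ is $C^1$, so $\|P_{T_yM}-P_{T_0M}\|_{\mathrm{op}}=O(|y|)$. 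Since $T^{\bot}(y/|y|)$ lies in $(T_0M)^{\bot}$ and has norm $|\psi(v)|/|y|=O(|y|)$, it is annihilated by $P_{T_0M}$, whence $|S\circ T^{\bot}(y/|y|)|=|P_{T_yM}\,T^{\bot}(y/|y|)|\le\|P_{T_yM}-P_{T_0M}\|_{\mathrm{op}}\,|T^{\bot}(y/|y|)|=O(|y|^2)$. As $\rho'$ is supported in $[0,1]$ the integrand in \eqref{eq:simpleformula} vanishes for $|y|>\e$, and $\V(B(0,\e))=O(\e^{d})$ ($M$ being a $C^2$ graph near $0$), so $|H_{\e}^{S\circ T^{\bot}}(0,V)|\le\e^{-d-1}\|\rho'\|_{\infty}\int_{B(0,\e)}O(|y|^2)\,d\V(y)\le C\,\e^{-d-1}\e^{2}\e^{d}=C\e\to0$, giving $\lambda(S\circ T^{\bot})=0$. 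Substituting back yields $\lambda(S\circ T)=H$, $\lambda(S^{\bot}\!\circ T)=-H$ (hence $\lambda(-S^{\bot}\!\circ T)=H$), and $\lambda(S^{\bot}\!\circ T^{\bot})=\tfrac12H$ (hence $\lambda(2S^{\bot}\!\circ T^{\bot})=H$), which is exactly the first list.

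I do not expect a genuine obstacle: the linearity reduction is purely formal and the only analysis is the harmless bound of the third step. The one point requiring care is the bookkeeping of composition order in the projection identities, since these projections do not commute (so $T^{\bot}\!\circ S^{\bot}\neq S^{\bot}\!\circ T^{\bot}$ in general, and only resolutions of the identity composed on a fixed side may be used). If one instead insisted on imitating \cite{br} directly — Taylor-expanding each $\Pi_y(y/|y|)$ in graph coordinates, rescaling $v=\e z$, and passing to the limit by dominated convergence — the delicate feature would be that for the operators of the second list the naive leading term is of order $\e^{-1}$ and must be shown to vanish by the $z\mapsto-z$ symmetry of the rescaled integral; the linearity argument is precisely what lets one sidestep this.
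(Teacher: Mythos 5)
Your proof is correct, and the reduction is cleaner than the paper's: you observe that, given Proposition~\ref{propbr3.3}, linearity of $\Pi\mapsto H_{\e}^{\Pi}$ and the one-sided resolutions $S\circ T+S\circ T^{\bot}=S$, $T\circ S+T^{\bot}\circ S=S$, etc., reduce the \emph{entire} statement to the single estimate $\lim_{\e\to0}H_{\e}^{S\circ T^{\bot}}=0$, which you then handle directly. The paper does use the same linear-algebra bookkeeping (its Steps~4 and~6), but only after independently establishing $\lm H_{\e}^{T}=0$, $\lm H_{\e}^{T^{\bot}\circ S}=\lm H_{\e}^{2T^{\bot}}$, and $\lm H_{\e}^{S\circ T^{\bot}}=0$ through three separate calculations; your observation that one of them suffices is a genuine tightening.

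The more substantive divergence is in how you prove $\lm H_{\e}^{S\circ T^{\bot}}=0$. The paper passes through the technical Lemmas~\ref{lemma:ellipse} and~\ref{lemma1_prop3.3br}: it replaces $|y|$ by $|T(y)|$ via the height-excess decay, rewrites the integral over the tangent plane via the area formula, kills the Jacobian via the tilt-excess decay, Taylor-expands the projector $S$ in graph coordinates (identity \eqref{S_expansion}), and estimates $|Df(z)^{t}f(z)|$. You instead argue intrinsically: $T^{\bot}(y/|y|)$ lives in $(T_{0}M)^{\bot}=\ker P_{T_{0}M}$ and has norm $O(|y|)$, while $\|P_{T_{y}M}-P_{T_{0}M}\|=O(|y|)$ because the tangent map of a $C^{2}$ submanifold is $C^{1}$; hence $|S\bigl(T^{\bot}(y/|y|)\bigr)|=|\bigl(P_{T_{y}M}-P_{T_{0}M}\bigr)T^{\bot}(y/|y|)|=O(|y|^{2})$, and the crude bound $\V(B(0,\e))=O(\e^{d})$ finishes. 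This is shorter, avoids the area formula and the excess-decay machinery entirely, and never invokes Lemmas~\ref{lemma:ellipse} and~\ref{lemma1_prop3.3br}. The price is that it is genuinely $C^{2}$-bound: the paper deliberately routes everything through tilt- and height-excess (as it announces before the proof) precisely so that the same skeleton carries over verbatim to the unit-density varifold setting of Proposition~\ref{propbr3.3_gen2}, where no pointwise $C^{1}$ Lipschitz estimate on $y\mapsto P_{T_{y}M}$ is available. For the statement as posed, however, your argument is complete and correct.
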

\begin{remk}(General approximation of the mean curvature)
In fact, we are able to determine $\lm H_{\e}^{\Pi}$ for any $\Pi$ in the algebra generated by $2 {\rm Id}$, $T$, $S$,$T^{\bot}$ and $S^{\bot}$, and it is always parallel to $H(0,M)$.
\end{remk}
We first start by stating some technical lemmas that will be useful for the proofs.
\begin{lemma}\label{lemma:ellipse} Let $V$ be  a $d$-integral varifold in $\R^{n}$, assume that $H(0,V) \in\L^{1}(\V)$ with $(\delta V)_s \equiv 0$, then 
\begin{equation*}
  \lm H_{\e}^{\Pi}
 =-\lm {\e}^{-d-1}\int_{\abs{y}\leq \e}\rho'\left(\frac{\vert T(y) \vert}{\epsilon}\right)\Pi_y\left(\frac{y}{\vert y \vert}\right) \, d \V(y).
\end{equation*}
for any linear operator $\Pi$.
\end{lemma}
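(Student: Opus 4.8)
The goal is to replace the argument $\rho'(|y|/\e)$ by $\rho'(|T(y)|/\e)$ in the formula \eqref{eq:simpleformula} for $\lm H_\e^\Pi$, up to an error that vanishes as $\e\to 0$. The plan is to estimate the difference
\[
D_\e := \e^{-d-1}\int_{\R^n}\left[\rho'\!\left(\frac{|y|}{\e}\right)-\rho'\!\left(\frac{|T(y)|}{\e}\right)\right]\Pi_y\!\left(\frac{y}{|y|}\right)d\V(y)
\]
and show $D_\e\to 0$. First I would note that since $\rho$ is supported in $[0,1]$ and $|T(y)|\le|y|$, both terms in the bracket vanish unless $|T(y)|\le\e$; in particular the integration may be restricted to $\{|T(y)|\le\e\}$, which after the estimate is what produces the stated domain $\{|y|\le\e\}$ in the conclusion (one still has to argue the contribution of $\e<|y|$, $|T(y)|\le\e$ is lower order — see below). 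Since $\rho'$ is smooth, hence Lipschitz, the bracket is bounded by $\frac{C}{\e}\big||y|-|T(y)|\big|$, and $\big|\Pi_y(y/|y|)\big|\le\|\Pi_y\|$, which is uniformly bounded because the building blocks $T,S,T^\bot,S^\bot$ are orthogonal projections; so $|D_\e|\le C\e^{-d-2}\int_{|y|\le\e}\big||y|-|T(y)|\big|\,d\V(y)$.

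The key geometric input is the quantitative estimate $\big||y|-|T(y)|\big|=\frac{|T^\bot(y)|^2}{|y|+|T(y)|}\le\frac{|T^\bot(y)|^2}{|y|}$, together with a bound on how much mass of the varifold sits at height $|T^\bot(y)|$ away from the tangent plane $T=T_0V$. For a $d$-integral varifold with $H\in L^1(\|V\|)$ and no singular part of the first variation, one has the tilt-excess / height-bound type control coming from monotonicity and the Simon-type inequalities; concretely, for $\|V\|$-a.e.\ small scale, $|T^\bot(y)|\le o(|y|)$ and more usefully $\e^{-d}\int_{|y|\le\e}\frac{|T^\bot(y)|^2}{|y|^2}\,d\V(y)\to 0$. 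Combining, $|D_\e|\le C\,\e^{-1}\cdot\e^{-d}\int_{|y|\le\e}\frac{|T^\bot(y)|^2}{|y|^2}\,d\V(y)$ — but this has an extra $\e^{-1}$, so the naive Lipschitz bound is too lossy by one power.

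To recover the missing power of $\e$ I would instead Taylor-expand $\rho'$ to second order, or equivalently exploit that the difference quotient involves $\frac{|y|-|T(y)|}{\e}$ which is itself $O(|T^\bot(y)|^2/(\e|y|))$ and hence, on the relevant region where additionally $|y|\sim\e$ (the bulk of the mass, by rectifiability, concentrates near the tangent plane so $|y|\approx|T(y)|\le\e$ forces $|y|$ comparable to $\e$ up to the thin boundary layer), this is $O(|T^\bot(y)|^2/\e^2)$. Then $|D_\e|\lesssim \e^{-d}\int_{|y|\le c\e}\frac{|T^\bot(y)|^2}{\e^2}\,d\V(y)\to 0$ by the height bound. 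The remaining region $\e<|y|\le\e(1+\delta)$ with $|T(y)|\le\e$ carries mass $o(\e^d)$ by the density-$1$ property and contributes negligibly. The main obstacle is precisely this power-counting: making rigorous that the $L^2$ height excess $\e^{-d-2}\int_{|y|\le\e}|T^\bot(y)|^2 d\V(y)\to 0$ in the generality stated (density-one varifold, $H\in L^1$, $(\delta V)_s\equiv 0$) — this is where one invokes the quadratic height bound from Brakke/Simon together with the monotonicity formula, and I would cite or reprove the relevant $C^{1,\alpha}$-type / Lipschitz-approximation estimate rather than grind it out. In the $C^2$ case the estimate $|T^\bot(y)|\le C|y|^2$ is elementary, so Lemma~\ref{lemma:ellipse} is immediate there and only the general case needs the measure-theoretic machinery.
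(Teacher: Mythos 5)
Your plan captures the correct skeleton — reduce to \eqref{eq:simpleformula}, apply a mean-value estimate to $\rho'$, rewrite $\bigl||y|-|T(y)|\bigr|$ as $|T^\perp(y)|^2/(|y|+|T(y)|)$, and close with a height-excess decay — and you correctly diagnose that a one-shot Lipschitz bound on $\rho'$ leaves you one power of $\e$ short. But the resolution you sketch is not the paper's, and as stated it has two gaps.

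First, the ``$|y|\sim\e$ on the bulk / thin boundary layer'' argument is a heuristic, not a proof: the bound $\bigl||y|-|T(y)|\bigr|\le |T^\perp(y)|^2/|y|$ has a singular $|y|^{-1}$, and handling it rigorously by splitting near $0$ would require a dyadic decomposition over scales $2^{-k}\e$ plus the height-excess decay at every one of those scales, which you do not carry out. The paper avoids this entirely by a cleaner mechanism: apply the mean-value theorem to $\rho'$ to get $\rho''(|z|/\e)\,\bigl||y|-|T(y)|\bigr|/\e$ with an intermediate $z$ satisfying $|T(y)|\le |z|\le |y|$, and then apply it once more to $\rho''$ (using $\rho''(0)=0$) to replace $\rho''(|z|/\e)$ by $\|\rho'''\|_\infty\,|z|/\e$. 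The resulting factor $|z|/(|y|+|T(y)|)\le 1$ cancels the denominator from $\bigl||y|-|T(y)|\bigr| = |T^\perp(y)|^2/(|y|+|T(y)|)$ exactly, leaving $\|\Pi\|\,\|\rho'''\|_\infty\,\e^{-2}\int_{|y|\le\e}|T^\perp(y)|^2\,d\|V\|$ with no $|y|$ in the denominator at all. This algebraic cancellation — not a ``second-order Taylor expansion'' in the usual sense — is the point you are missing.

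Second, your power counting cites the height excess as $\int_{B_\e}|T^\perp(y)|^2\,d\|V\|=o(\e^{d+2})$, which is one power too weak: even with the $\e^{-2}$ gain above, $\e^{-d-1}\cdot\e^{-2}\cdot o(\e^{d+2})=o(\e^{-1})$ does not converge. The estimate the paper actually invokes is Theorem~\ref{thm:tilttex-heightex} (Menne, Theorem~5.7 of \cite{menne08}), which under exactly the stated hypotheses ($d$-integral, $H\in\mathcal L^1(\|V\|)$, $(\delta V)_s\equiv 0$) gives $o(\e^{d+3})$; then $\e^{-d-1}\cdot\e^{-2}\cdot o(\e^{d+3})=o(1)$. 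Finally, your worry about the region $\{\e<|y|,\ |T(y)|\le\e\}$ is moot: the formula \eqref{eq:simpleformula} is already an integral over $\{|y|\le\e\}$ because $\rho$ is supported on $[0,1]$, and the Lemma's conclusion also restricts to $\{|y|\le\e\}$, so both integrals in the difference live on the same domain and no extra boundary-layer argument is needed.
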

\begin{proof} The main idea is to use the height-excess decay estimate on formula \eqref{eq:simpleformula} to get rid of the orthogonal component.

\noindent Let  $\Pi$ be a linear operator. As $\rho$ is supported on $[0,1]$, we have
\begin{equation}\label{eq:ellipse}
\begin{split}
&\abs{ \int_{\R^{n}}\rho'\left(\frac{\vert y \vert }{\epsilon}\right) \Pi_y\left(\frac{ y }{\vert y \vert}\right) d \V -  \int_{\abs{y}\leq \e} \rho'\left(\frac{\vert T(y) \vert }{\epsilon}\right)\Pi_y \left(\frac{y}{\vert y \vert}\right) d \V } 
\\& = 
\abs{ \int_{\abs{y}\leq \e}\rho'\left(\frac{\vert y \vert }{\epsilon}\right) \Pi_y\left(\frac{ y }{\vert y \vert}\right) d \V -  \int_{\abs{y}\leq \e} \rho'\left(\frac{\vert T(y) \vert }{\epsilon}\right)\Pi_y \left(\frac{y}{\vert y \vert}\right) d \V }
\\& \leq
\int_{\vert y \vert \leq \e } \abs{ \rho' \left(\frac{\vert y \vert}{\epsilon}\right)- \rho'\left(\frac{\vert T(y) \vert}{\epsilon}\right) } \abs{\Pi_y\left(\frac{y}{\vert y \vert}\right)  }d \V
\\&
\leq \| \Pi\|
\int_{\vert y \vert \leq \e } \abs{ \rho' \left(\frac{\vert y \vert}{\epsilon}\right)- \rho'\left(\frac{\vert T(y) \vert}{\epsilon}\right) } d \V
\\& \leq \| \Pi\| \e^{-1} \int_{\vert y \vert \leq \e} \rho''(|z|/\e) \big\vert \vert y \vert - \vert T(y) \vert \big\vert d\V, \quad \text{with} \, |T(y)| \leq |z| \leq |y|
\\& = \| \Pi\|
\e^{-1} \int_{\vert y \vert \leq \e} \rho''(|z|/\e)  \frac{ \big\vert \vert y \vert^2 - \vert T(y) \vert^2 \big\vert}{ \vert y \vert + \vert T(y) \vert} d\V 
\\&
\leq \| \Pi\| \|\rho'''\|_{\infty} \e^{-2} \int_{\vert y \vert \leq \e} \frac{|z|}{\vert y \vert + \vert T(y) \vert} \abs{ \abs{y}^2 - \abs{T(y)}^2}d\V \quad \text{ as $\rho''(0)=0$ } 
\\& = \| \Pi\| \|\rho'''\|_{\infty} \e^{-2}  \int_{\vert y \vert \leq \e} \big\vert T^{\bot}(y) \big\vert^2 d\V \\& =o(\e^{d+1}) \quad  \text{where we used the height-excess decay (Theorem \ref{thm:tilttex-heightex})}.
\end{split}
\end{equation}
We know from \eqref{eq:simpleformula} that
\begin{equation*}
 -\lm H_{\e}^{\Pi} = \lm {\e}^{-d-1}\int_{\R^{n}}\rho'\left(\frac{\vert y\vert}{\epsilon}\right)\Pi_y\left(\frac{y}{\vert y \vert}\right) d \V(y)
\end{equation*}
this finishes the proof of Lemma \ref{lemma:ellipse}.
\end{proof}
\begin{remk}[Convergence rate]
In case $H(\cdot,V) \in \cL^p(\V), p\geq 2$ and $(\delta V)_s \equiv 0$  \cite[Corollary 3.7]{menne08} infers that the height-excess decay is of the order $O(\e^{d+2})$ (instead of $o(\e^{d+1})),$ therefore we have a better estimate in this case.
\end{remk}
 \begin{remk}[Robustness to orthogonal noise] Lemma ~\ref{lemma:ellipse} allows to prove the robustness of the approximation of the mean curvature to orthogonal noise. In fact, the proof can be straightforwardly adapted to prove the following:
 \begin{equation*}
  -\lm {\e}^{-d-1}\int_{\abs{y}\leq \e}\rho'\left(\frac{\vert T(y) + \beta(y) T^{\perp}(y) \vert}{\epsilon}\right)\Pi_y\left(\frac{y}{\vert y \vert}\right) \, d \V(y) = \lm H_{\e}^{\Pi} 
\end{equation*}
for any bounded function $\beta$. The contribution of points far away from the tangent space ($T$) gets smaller as $\beta$ gets larger, in practice, this allows to  eradicate orthogonal noise when approximating the mean curvature.
 \end{remk} 

\begin{lemma}\label{lemma1_prop3.3br}
Let $V\in V_d(\R^n)$ be an integral unit density varifold, assume that $H(\cdot,V) \in \cL^p(\V), p > 2d$ and $(\delta V)_s \equiv 0$. Let
\begin{equation*}
 \begin{array}{ll}
 F : & T \rightarrow \R^n\\
  & z \mapsto (z,f(z))
\end{array}
\end{equation*}
be a local parametrization of $\supp \|V\|$ near $0$, which exists thanks to Theorem \ref{thm:allard_reg}. Then
\begin{equation}\label{S_expansion}
S=T + \begin{pmatrix}
  0 & Df^t \\
  Df & 0
 \end{pmatrix} + O(\|Df\|^2)I_n,
\end{equation}
and 
\begin{equation}\label{eq:pre_tilt_ecess}
 \abs{JF-1} \lesssim  \|Df\|^2 \lesssim \| S - T \|^2,
\end{equation}
where $JF$ is the Jacobian of the map $F$. In addition,
\begin{equation}\label{eq:intermediate1_prop3.3br}
 \lm H_{\e}^{\Pi} = - \lm {\e}^{-d-1}\int_{A_{\e}}\rho'\left(\frac{\vert z \vert}{\epsilon}\right)\Pi\left(\frac{(z,f(z))}{\vert (z,f(z)) \vert}\right) d \L^d(z),
\end{equation}
where $A_{\e} = \{ z \in T, |(z,f(z))|\leq \e \}$ or $\{ z \in T, |z| \leq \e \}$.
\end{lemma}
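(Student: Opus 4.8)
The plan is to establish the three claims in order, starting from the local graph representation provided by Allard regularity (Theorem \ref{thm:allard_reg}). First I would derive the expansion \eqref{S_expansion} for the tangent plane $S = T_yV$ at a point $y = (z,f(z))$ on the graph. Since the tangent space to the graph of $f$ over $T$ is spanned by the columns of $\begin{pmatrix} I_d \\ Df \end{pmatrix}$, the orthogonal projector onto $S$ is $\begin{pmatrix} I_d \\ Df \end{pmatrix}(I_d + Df^tDf)^{-1}\begin{pmatrix} I_d & Df^t \end{pmatrix}$; expanding $(I_d + Df^tDf)^{-1} = I_d - Df^tDf + O(\|Df\|^4)$ in powers of $Df$ and comparing with $T = \begin{pmatrix} I_d & 0 \\ 0 & 0\end{pmatrix}$ gives exactly the first-order term $\begin{pmatrix} 0 & Df^t \\ Df & 0\end{pmatrix}$ with an $O(\|Df\|^2)$ remainder in operator norm. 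This is a routine matrix computation.

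Next, for \eqref{eq:pre_tilt_ecess}, the Jacobian of $F(z) = (z, f(z))$ is $JF = \sqrt{\det(I_d + Df^tDf)}$; since $\det(I_d + Df^tDf) = 1 + \|Df\|_{HS}^2 + (\text{higher order})$, we get $|JF - 1| \lesssim \|Df\|^2$, and the bound $\|Df\|^2 \lesssim \|S-T\|^2$ follows immediately from \eqref{S_expansion} (the off-diagonal block $Df$ is a sub-block of $S - T$ up to the $O(\|Df\|^2)$ correction, so for $\|Df\|$ small the norms are comparable).

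The substantive step is \eqref{eq:intermediate1_prop3.3br}: rewriting the integral over $\supp\|V\|$ near $0$ as an integral over $A_\e \subset T$ via the parametrization $F$. Starting from Lemma \ref{lemma:ellipse}, which already replaced $|y|$ by $|T(y)|$ in the kernel argument, I would push forward the measure $\|V\|$ — which, by unit density and rectifiability, equals $\mathcal{H}^d \llcorner \supp\|V\|$ locally — under $F^{-1}$, picking up the Jacobian factor $JF$. The point is that replacing $JF\,d\mathcal{L}^d(z)$ by $d\mathcal{L}^d(z)$ costs an error controlled by $\int_{A_\e} |JF - 1| \lesssim \int_{A_\e}\|S-T\|^2$, which by the tilt-excess / height-excess decay (Theorem \ref{thm:tilttex-heightex}, using $H \in \cL^p$ with $p > 2d$ and $(\delta V)_s \equiv 0$) is $o(\e^{d+1})$; after multiplication by $\e^{-d-1}$ this vanishes in the limit. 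One must also check that the two possible choices of $A_\e$ (defined via $|z|\le\e$ or via $|(z,f(z))|\le\e$) differ by a symmetric-difference set of $\mathcal{L}^d$-measure $o(\e^{d+1})$, which again follows from $|f(z)| = |T^\perp(y)|$ being controlled by the height-excess, so the kernel $\rho'$ being supported in $[0,1]$ makes the discrepancy negligible. The main obstacle here is bookkeeping the various small error terms and invoking the correct excess-decay estimate with the right integrability exponent; I expect the condition $p > 2d$ (rather than merely $p \ge 2$) is precisely what guarantees the $\|Df\|$-type quantities are small enough pointwise for the matrix expansions above to be valid on the relevant scale, so I would be careful to track where that hypothesis is used.
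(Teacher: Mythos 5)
Your derivations of \eqref{S_expansion} and \eqref{eq:pre_tilt_ecess}, and your strategy for \eqref{eq:intermediate1_prop3.3br} via the area formula and the tilt-excess decay to remove the Jacobian, match the paper's proof. However, there is a genuine gap in the last step, where you estimate $\cL^d(D_\e \setminus \cF_\e)$ (with $D_\e = \{z\in T : |z|\le\e\}$ and $\cF_\e = \{z\in T : |(z,f(z))|\le\e\}$). You attribute this estimate to the height-excess decay, i.e.\ to $\int_{B_\e}|T^\perp(y)|^2\,d\|V\| = o(\e^{d+3})$. But this is an $L^2$-type bound on $|f|$, and it does \emph{not} deliver $\cL^d(D_\e\setminus\cF_\e)=o(\e^{d+1})$: splitting according to $\{|f|>\lambda\}$ and $\{|f|\le\lambda\}$ and optimizing $\lambda$ gives at best something like $o(\e^{d+1/2})$, which is not enough after multiplying by $\e^{-d-1}$.

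What the paper actually uses is the \emph{pointwise} estimate $\sup_{|z|\le\e}|f(z)| \le c\,\e^{1+\alpha}$ coming from the $C^{1,\alpha}$ regularity of $f$ in Allard's theorem (with $\alpha = 1-d/p$) together with $f(0)=0$, $Df(0)=0$. This gives the inclusions $D_\beta\subset\cF_\e\subset D_\e$ with $\beta = \e(1-c^2\e^{2\alpha})^{1/2}$, hence $\cL^d(D_\e\setminus\cF_\e)\lesssim\e^{d+2\alpha}$. The hypothesis $p>2d$ is exactly what makes $2\alpha = 2(1-d/p)>1$, so that $\e^{d+2\alpha}=o(\e^{d+1})$. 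You correctly sensed that $p>2d$ must be doing some pointwise work, but you did not identify this as its actual role, and the height-excess-based argument you propose in its place would not close this step.
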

%
\begin{proof}
For simplicity we will omit the variable $z(\in T)$ when there is no risk of confusion. We recall that $F$  is a parametrization above the tangent space $T$ and that $f(0)=0$, $Df(0)=0$.

\noindent We start with the proof of  \eqref{S_expansion}. We first recall the expression of the projector $S$ in the matricial form, we have

\begin{equation*}
 S = DF(DF^tDF)^{-1}DF^t \in \mathcal{M}_n.
\end{equation*}

\noindent Let $M:=(DF^tDF)^{-1} \in \mathcal{M}_d $, we have
\begin{equation*}
 M^{-1}= DF^tDF = I_d + Df^tDf
\end{equation*}
thus $M^{-1} = I_d + O(\|Df\|^2)I_d$. Therefore $M = I_d + O(\|Df\|^2)M$, which yields $\|M\|=O(1)$ and 
$$M= I_d + O(\|Df\|^2)I_d.$$ We carry on with the expansion of $S$, from what is obtained previously we have, as $DF$ is bounded near $0$,
\begin{equation*}
 S= DF(DF^tDF)^{-1}DF^t = DF DF^t + O(\|Df\|^2)I_n = 
 \begin{pmatrix}
 I_d & 0 \\
 0 & 0
\end{pmatrix}
+ \begin{pmatrix}
  0 & Df^t \\
  Df & 0
 \end{pmatrix} + O(\|Df\|^2)I_n,
\end{equation*}
 this finishes the proof of \eqref{S_expansion}.

\noindent We now prove \eqref{eq:pre_tilt_ecess}.

\noindent Indeed, we write down the  Taylor expansion for the Jacobian:
\begin{equation}
 JF^2 = \det \left((DF)^{t}(DF)\right) = \det \left( 
 \begin{pmatrix}
 I_d&\\
 Df
\end{pmatrix}^t
\begin{pmatrix}
 I_d&\\
 Df
\end{pmatrix}
 \right)= 1 + O(\norm{Df}^2).
 \end{equation}
This implies that $ |JF^2 -1 | = O(\|Df\|^2)$, hence
\begin{equation}
 |JF-1| =\frac{|JF^2 -1 |}{JF+1} \leq |JF^2 -1 | = O(\|Df\|^2).
\end{equation}
and this finishes the first part of the proof of inequality \eqref{eq:pre_tilt_ecess}. For the second part, we use \eqref{S_expansion} to infer that 
\begin{equation}
 \| S - T \| = \norm{ \begin{pmatrix}
  0 & Df^t \\
  Df & 0
 \end{pmatrix} }+ o(\|Df\|)I_n.
\end{equation}
One can check easily that $ \norm{\begin{pmatrix}
  0 & Df^t \\
  Df & 0
 \end{pmatrix} }= \|Df\|$
this yields
\begin{equation}
\Big| \| S - T \| -  \|Df\| \Big| \leq \norm{ (S-T) - \begin{pmatrix}
  0 & Df^t \\
  Df & 0
 \end{pmatrix} }  = o(\|Df\|)
 \end{equation}
as a result
\begin{equation}
 \| Df \| \lesssim \|S-T\| \lesssim \|Df\|  
\end{equation}
and this finishes the proof \eqref{eq:pre_tilt_ecess}.

\noindent Finally, we provide the proof of \eqref{eq:intermediate1_prop3.3br}. The main idea is to  use the tilt-excess decay estimate (Theorem \ref{thm:tilttex-heightex}) to get rid of the Jacobian after applying the area formula.\\
By Lemma ~\ref{lemma:ellipse}, we have : 
$$ \lm H_{\e}^{\Pi} = 
 - \lm {\e}^{-d-1}\int_{\abs{y}\leq \e}\rho'\left(\frac{\vert T(y) \vert}{\epsilon}\right)\Pi\left(\frac{y}{\vert y \vert}\right) d \V.$$
Denote $\cF_{\e}:= \{ z \in T, |(z,f(z))|\leq \e \}$. The area formula implies
\begin{equation}
\int_{\abs{y}\leq \e}\rho'\left(\frac{\vert T(y) \vert}{\epsilon}\right)\Pi\left(\frac{y}{\vert y \vert}\right) d \V = 
 \int_{\cF_{\e}}\rho'\left(\frac{\vert z \vert}{\epsilon}\right)\Pi\left(\frac{(z,f(z)}{\vert (z,f(z)) \vert}\right) JF(z) \, d \L^d(z).
\end{equation}
We now show that
\begin{equation}\label{lemma1_prop3.3br_prf1}
  \abs{\int_{\cF_{\e}}\rho'\left(\frac{\vert z \vert}{\epsilon}\right)\Pi\left(\frac{(z,f(z)}{\vert (z,f(z)) \vert}\right) JF(z) \, d \L^d(z)-
 \int_{\cF_{\e}}\rho'\left(\frac{\vert z \vert}{\epsilon}\right)\Pi\left(\frac{(z,f(z))}{\vert (z,f(z)) \vert}\right) d \L^d(z) } = o(\e^{d+1}).
\end{equation}
Indeed,
\begin{equation*}
 \begin{split}
    &\abs{ \int_{\cF_{\e}}\rho'\left(\frac{\vert z \vert}{\epsilon}\right)\Pi\left(\frac{(z,f(z))}{\vert (z,f(z)) \vert}\right) JF(z) \, d \L^d(z) -
    \int_{\cF_{\e}}\rho'\left(\frac{\vert z \vert}{\epsilon}\right)\Pi\left(\frac{(z,f(z))}{\vert (z,f(z)) \vert}\right) \, d \L^d(z)
    } \\&  \leq\|\rho'\|_{\infty} \|\Pi\|  \int_{\cF_{\e}} \abs{JF(z)-1} \, d\L^d(z)
    \lesssim   \int_{\cF_{\e}} \norm{S-T}^2 \,d\L^d(z).
 \end{split}
\end{equation*}
Using $1 \lesssim JF(z)$ and applying the area formula we get: (restoring the variables for clarity)
\begin{equation*}
 \begin{split}
\int_{\cF_{\e}} \norm{S(F(z))-T}^2 d\L^d(z)
& \lesssim \int_{\cF_{\e}} \norm{S(F(z))-T}^2 JF(z) d\L^d(z)  
\\& =\int\limits_{\abs{y}\leq \e} \norm{S(y)-T}^2 d\V(y) = o(\e^{d+1}) \quad \text{by the tilt-excess decay estimate}.
 \end{split}
\end{equation*}
 this proves the first part of  \eqref{eq:intermediate1_prop3.3br}.\\
\noindent For $\delta \geq 0$ we  denote : $D_{\delta} := \{ z \in T , |z|\leq \delta \}$. We now prove that
$$\cL^d \left( D_{\e} \setminus \cF_{\e} \right)=o(\e^{d+1}).$$
\noindent Let $\alpha =1 - \frac{d}{p}$; indeed, as $f(0)=0$, $Df(0)=0$ and $f\in C^{1,\alpha}$, we can assert that
\begin{equation*}
 \sup\limits_{|z| \leq  \e} |f(z)| \leq c \,\e^{1+\alpha}
\end{equation*}
for some constant $c$ not depending on $\e$. For $\e$ small enough such that $ c^2\e^{2\alpha} < 1$ , let $\beta = \e (1-c^2\e^{2\alpha})^{\frac12}$, $z \in D_{\beta}$ implies that
\begin{equation}
 |z|^2 \leq \e^{2}(1-c^2 \e^{2\alpha}) =  \e^2 - c^2\e^{2(1+\alpha)} \leq \e^2 - |f(z)|^2,
\end{equation}
hence $z\in \mathcal{F}_{\e}$; and 
\begin{equation}
 D_{\beta} \subset \mathcal{F}_{\e} \subset D_{\e}.
\end{equation}
Consequently 
\begin{equation}
 \cL^{d} \left(D_{\e} \setminus  \cF_{\e} \right) \leq \cL^{d}  \left( D_{\e} \setminus D_{\beta} \right) \leq \omega_d \e^{d} \left( 1 - (1-c^2 \e^{2\alpha})^\frac{d}{2} \right).
\end{equation}
By mean value theorem applied to the function $ x \to x^{d/2}$ between the points $1$ and  $1-c^2 \e^{2\alpha}$, we have 
$$ |1 - (1-c^2 \e^{2\alpha})^\frac{d}{2} | \leq c(d) c^2 \e^{2\alpha} = o(\e) \quad \text{as} \,\,2 \alpha = 2 (1 - \frac{d}{p}) > 1.$$ 
Therefore 
$$\cL^d \left( D_{\e} \setminus \cF_{\e} \right)=o(\e^{d+1}).$$
We now prove the second part of \eqref{eq:intermediate1_prop3.3br}. We have
\begin{equation}
 \begin{split}
  & \Big| \int_{\cF_{\e}}\rho'\left(\frac{\vert z \vert}{\epsilon}\right)\Pi\left(\frac{(z,f(z))}{\vert (z,f(z)) \vert}\right) \, d \L^d(z) -  \int_{D_{\e}}\rho'\left(\frac{\vert z \vert}{\epsilon}\right)\Pi\left(\frac{(z,f(z))}{\vert (z,f(z)) \vert}\right) \, d \L^d(z) \Big|
 \\& \leq \cL^d\left( D_{\e} \setminus \cF_{\e} \right) \|\Pi\| \| \rho'\|_{\infty} = o(\e^{d+1}).
 \end{split}
\end{equation}
This ends the proof of \eqref{eq:intermediate1_prop3.3br} and the lemma.
\end{proof}
%

%
We now give the proof of Propositions \ref{propbr3.3} and \ref{propbr3.3_gen1}. Compared to the proof of \cite{br}, our proof relies less on the $C^2$ character of the submanifold $M$ and more on the fact that $M$ has a mean curvature in the sense of varifolds with a good integrability, hence it enjoys the tilt-excess decay and height-excess decay properties.
\begin{proof}[Proof of Propositions \ref{propbr3.3} and \ref{propbr3.3_gen1}]\noindent

\noindent Let 
 \begin{equation}
 \begin{array}{ll}
 F : & T \rightarrow \R^n\\
  & z \mapsto (z,f(z)).
\end{array}
\end{equation}
be a local parametrization of $M$ near $0$. $H(\cdot,M) \in L^{\infty}$ and $(\delta M)_s \equiv 0$; therefore Lemma \ref{lemma1_prop3.3br} is valid for $V=M$.


\noindent {\bf{Sketch of the proof}:} We start by showing that $\lm H_{\e}^T=0$, this uses the smallness of $f$ near $0$ and the symmetry of the map $\rho'_{\e}$ w.r.t to the origin. We then use the integral character of $M$ to deduce 
$$\lim\limits_{\e \rightarrow 0} H_{\epsilon}^{S} = \lim\limits_{\e \rightarrow 0} H_{\epsilon}^{T^{\perp} \circ S} =  H(0,M).$$
We show that $\lim\limits_{\e \rightarrow 0} H_{\epsilon}^{T^{\bot} \circ S} = \lm H_{\e}^{2T^{\bot}}$ using the tilt-excess decay estimate and the Taylor expansion of $f$ to the second order, we then obtain the proof of Proposition \ref{propbr3.3} using the linearity of the map $\Pi \mapsto H_{\e}^{\Pi}$. Finally, we prove that $\lm H_{\e}^{S \circ T^{\perp}} =0$ and obtain the proof of Proposition \ref{propbr3.3_gen1}.


\noindent {\bf Step 1:} We show  that $\lm H_{\e}^T=0$.

\noindent Indeed, from \eqref{eq:intermediate1_prop3.3br} we infer that
\begin{equation}
 \lm H_{\e}^{T} = - \lm {\e}^{-d-1}\int_{|z| \leq \e}\rho'\left(\frac{\vert z \vert}{\epsilon}\right)\frac{(z,0)}{\vert (z,f(z)) \vert} \, d \L^d(z).
\end{equation}
 
\noindent The map $f$ is $C^2$, $f(0)=Df(0)=0$ thus $f(z) = O(|z|^2)=O(\e^2)$, this implies 
\begin{equation}
|(z,f(z))|^{-1}=|z|^{-1}\left( 1 + O(|z|^2) \right)= |z|^{-1} + o(\e). 
\end{equation}
Therefore, 
\begin{equation}
 \lm H_{\e}^{T} = - \lm {\e}^{-d-1}\int_{|z| \leq  \e}\rho'\left(\frac{\vert z \vert}{\epsilon}\right)\frac{(z,0)}{|z|} \, d \L^d(z).
\end{equation}
By the symmetry of the map $z \mapsto \rho'\left(\frac{\vert z \vert}{\epsilon}\right)$ w.r.t the origin, we obtain that $\lm H_{\e}^T=0$.

\noindent {\bf Step 2:} We show that $$\lim\limits_{\e \rightarrow 0} H_{\epsilon}^{S} = \lim\limits_{\e \rightarrow 0} H_{\epsilon}^{T^{\perp} \circ S} =  H(0,M).$$

\noindent Indeed, $M$ is $C^2$ hence the varifold associated to $M$  is rectifiable, \cite[Theorem 4.3]{blm1} asserts that 
\begin{equation}
 \lm H_{\epsilon}^{S}=H(0,M). 
\end{equation}
Moreover, the varifold associated to $M$ is integral, \cite[Theorem 5.8]{brakke} asserts that  
$H^{\perp} = H$, noting that $\lm  H_{\epsilon}^{T^{\perp} \circ S} = T^{\perp}(\lm  H_{\epsilon}^{ S}) $ finishes the proof.

\noindent {\bf Step 3:} We show that 
\begin{equation}
 \lim\limits_{\e \rightarrow 0} H_{\epsilon}^{T^{\bot} \circ S} = \lm H_{\e}^{2T^{\bot}}.
\end{equation}

\noindent From \eqref{S_expansion} we have 
$$ T^{\bot}\circ S = 
 \begin{pmatrix}
  0 & 0 \\
  Df & 0
 \end{pmatrix} + O(\norm{Df}^2)I_n.$$
The tilt-excess decay estimate (Theorem \ref{thm:tilttex-heightex}) together with \eqref{eq:pre_tilt_ecess} imply:  
\begin{equation}\label{eq:step3_gen1prf}
  \int\limits_{|y| \leq \e}  \norm{Df}^2 \, d\V  \lesssim \int\limits_{|y| \leq \e}  \|S-T\|^2 \, d\V = o(\e^{d+3}),
 \end{equation}
 thus, we only need  to prove  $\lim\limits_{\e \rightarrow 0} H_{\epsilon}^{Z} = \lm H_{\e}^{2T^{\bot}}$ where
 $Z=\begin{pmatrix}
  0 & 0 \\ Df & 0
 \end{pmatrix}$. \\
 From \eqref{eq:intermediate1_prop3.3br} and denoting $\mathcal{F}_{\e} := \{ z \in T, \, |(z,f(z))|\leq \e \}$ we obtain
 \begin{equation*}
\begin{split}
    &\lm\abs{H_{\epsilon}^{Z} - H_{\e}^{2 T^{\perp}}} 
    \\& = \lm \e^{-d-1}
    \abs{ \int_{\mathcal{F}_{\e}}\rho'\left(\frac{\vert z \vert}{\epsilon}\right) \left(\frac{(0,(Df(z))(z)}{\vert (z,f(z)) \vert}\right)\, d \L^d(z) - \int_{\mathcal{F}_{\e}}\rho'\left(\frac{\vert z \vert}{\epsilon}\right) \left(\frac{(0,2f(z))}{\vert (z,f(z)) \vert}\right) \, d \cL^d(z)  }
 \\& \leq  \lm \e^{-d-1} \|\rho'\|_{\infty} \int_{\mathcal{F}_{\e}} \frac{|(Df(z))(z)-2f(z)| }{\vert (z,f(z)) \vert }\, d\L^d(z). 
\end{split}
 \end{equation*}
Combining the two Taylor expansions of $f$, at $0$:
 $$ f(z)=\frac{1}{2} \left((D^2f)(0)\right)(z,z)+o(\abs{z}^2),$$
and at $z$:
 $$ f(0)= f(z) + (Df(z))(-z)+\frac{1}{2}\left((D^2 f)(z)\right)(-z,-z)+o(\abs{z}^2)$$
we get: $$|(Df(z))(z)-2f(z)| =o(|z|^{2}) $$ and this finishes the proof of Step $3$.
 
\noindent {\bf Step 4:} Proof of Proposition \ref{propbr3.3}.
So far, we showed that $\lm H_{\e}^{\Pi} = H(0,M)$ for $\Pi \in \{S, 2 T^{\perp}, T^{\perp}\circ S \}$ and $\lm H_{\e}^{T} = 0$. The map $\Pi \rightarrow H_{\e}^{\Pi}$ is linear, from 
\begin{equation}
 2 {\rm Id} = 2T^{\perp} + 2T, \quad \text{and} -2S^{\perp}= 2S -2{\rm Id}
\end{equation}
and the orthogonality of $H(0,M)$, we deduce that $\lm H_{\e}^{\Pi} = H(0,M)$ for $\Pi \in \{2{\rm Id}, -2S^{\perp} ,  -2T^{\perp}\circ S^{\perp} \}$
and this finishes the proof of Proposition \ref{propbr3.3}.

\noindent {\bf Step 5:} We show that
\begin{equation}
 \lm H_{\e}^{S \circ T^{\perp}} =0.
\end{equation}

\noindent From \eqref{S_expansion} we have 
 $$ S\circ T^{\bot} = \begin{pmatrix}
  0 & Df^t \\ 0 & 0
 \end{pmatrix} + O(\norm{Df}^2)I_n, $$ 
\eqref{eq:step3_gen1prf} implies 
\begin{equation}\label{STBAR}\begin{split}
\lm  \e^{-d-1}\abs{H_{\e}^{S\circ T^{\bot}}} & 
= \lm \Bigl( \e^{-d-1} \abs{ \int_{\mathcal{F}_{\e}}\rho'\left(\frac{\vert z \vert}{\epsilon}\right) 
\begin{pmatrix}
0 & Df^t \\ 0 & 0
\end{pmatrix} 
\left(\frac{(z,f(z))}{\vert (z,f(z)) \vert}\right) \, d \L^d(z)} +o(\e^2) \Bigr)
\\ &
\leq \| \rho'\|_{\infty} \lm \e^{-d-1} \int_{\mathcal{F}_{\e}} \frac{ |  Df(z)^t(f(z)) |}{|(z,f(z))|} \, d\L^d(z).
\end{split}
\end{equation}
As $f(0)=0$ and $Df(0)=0$, we can infer that $|f(z)|=O(|z|^2)$ and $\| Df(z) \| = O(|z|)$. This finishes the proof of step 5.

\noindent \noindent {\bf Step 6:} Proof of Proposition \ref{propbr3.3_gen1}.

\noindent From Step $4$ and the orthogonality of $H$ we infer that $\lm H_{\e}^{\Pi}= 0$ for $\Pi \in \{ T\circ S , T \circ S^{\perp} \}$. We proved so far, in step $1$ and  step $4$, that $\lm H_{\e}^{\Pi} = 0$ for $\Pi \in \{T, S \circ T^{\perp} \}$, that concludes the proof of the second part of the proposition. 

\noindent Finally, from
\begin{equation}
 S^{\bot}\circ T^{\bot}+S\circ T^{\bot} = T^{\bot}, \quad  S\circ T^{\bot}+S\circ T = S, \quad \text{and} \,\,S^{\bot}\circ T+S\circ T = T,
\end{equation}
and step $4$ we conclude that $\lm H_{\e}^{\Pi}= H$ for $\Pi \in \{2S^{\bot}\circ T^{\bot}, S\circ T,   -S^{\bot}\circ T \}$ and the proof of Proposition \ref{propbr3.3_gen1}.
\end{proof}

\begin{remk}[$C^3$-Regularity and convergence rate]
In case the submanifold $M$ is  $C^3$, and as mentioned in \cite{br}, we have a better convergence rate. In fact, if $f\in C^3$ then the rest in the Taylor expansion of $f$ is of the order $O(|z|^3)$ instead of $o(|z|^2)$ when $f$ is only $C^2$.
\end{remk}

\begin{remk}(Use of natural kernel pair)
As we noticed in the proof and by reading the proof of \cite[Proposition 3.3]{br}, the kernels $\rho$ and $\xi$ do not need to be natural kernel pair \eqref{nkpcondition}. 
\end{remk}

\begin{prop}[Generalization in terms of regularity]\label{propbr3.3_gen2} Let $V$ be an $(n-1)$-integral varifold in $\R^{n}$ with unit density and $H(\cdot,V) \in \cL^p(\V)$ with $p>2(n-1)$ and $(\delta V)_s \equiv 0$.\\
For $\Pi$ in the set:  
$$ \lbrace S , -2S^{\bot}  , 2{\rm Id}, T^{\bot}\circ S, -2T^{\bot}\circ S^{\bot}, 2T^{\bot} \rbrace,$$
or in :
$$ \lbrace 2S^{\bot} \circ T^{\bot}, S\circ T, -S^{\bot} \circ T \rbrace $$
one has : $ \lim\limits_{\e \rightarrow 0} H_{\epsilon}^{\Pi} = H.$
For $\Pi$ in the set:
\begin{equation}
 \lbrace T, T\circ S, S\circ T^{\bot}, T\circ S^{\bot} \rbrace
\end{equation}
one has
\begin{equation}
 \lm H_{\e}^{\Pi}=0.
\end{equation}
\end{prop}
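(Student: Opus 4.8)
## Proof plan for Proposition \ref{propbr3.3_gen2}

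The plan is to imitate the proof of Propositions \ref{propbr3.3} and \ref{propbr3.3_gen1}, replacing every place where the $C^2$-structure of $M$ was used by the corresponding statement for the $(n-1)$-integral varifold $V$ under the hypotheses $H(\cdot,V)\in\cL^p(\V)$, $p>2(n-1)$, $(\delta V)_s\equiv 0$. The key point is that these hypotheses are exactly what is needed to invoke Allard's regularity theorem (Theorem \ref{thm:allard_reg}), which produces a local $C^{1,\alpha}$ parametrization $F(z)=(z,f(z))$ of $\supp\|V\|$ near $0$ with $f(0)=0$, $Df(0)=0$ and $\alpha=1-\tfrac{d}{p}$ where here $d=n-1$; and they also give access to the tilt-excess and height-excess decay estimates (Theorem \ref{thm:tilttex-heightex}), which were the analytic engine of Lemmas \ref{lemma:ellipse} and \ref{lemma1_prop3.3br}. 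Since $p>2(n-1)=2d$, Lemma \ref{lemma1_prop3.3br} applies verbatim, so \eqref{eq:intermediate1_prop3.3br} is available: $\lm H_\e^\Pi=-\lm\e^{-d-1}\int_{D_\e}\rho'(|z|/\e)\,\Pi\big((z,f(z))/|(z,f(z))|\big)\,d\cL^d(z)$ with $D_\e=\{z\in T:|z|\le\e\}$.

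I would then go through Steps 1--6 of the previous proof and check each one survives with only $C^{1,\alpha}$ regularity. Step 1 ($\lm H_\e^T=0$): here $f(z)=o(|z|^{1+\alpha})$ only gives $|(z,f(z))|^{-1}=|z|^{-1}(1+o(1))$, but that is still enough to reduce to $-\lm\e^{-d-1}\int_{|z|\le\e}\rho'(|z|/\e)\tfrac{(z,0)}{|z|}\,d\cL^d(z)$, which vanishes by the odd symmetry of $z\mapsto z\rho'(|z|/\e)/|z|$; one must only be slightly careful that the error term $o(1)\cdot|z|^{-1}\cdot\|\rho'\|_\infty|z|$ integrated over $D_\e$ is $o(\e^d)$, hence $o(\e^{d+1})$ after the $\e^{-d-1}$ prefactor once one checks the error is in fact $O(|z|^{2\alpha})$ with $2\alpha>1$. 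Step 2 ($\lm H_\e^S=\lm H_\e^{T^\perp\circ S}=H$): rectifiability of $V$ gives $\lm H_\e^S=H(0,V)$ by \cite[Theorem 4.3]{blm1}, and integrality gives $H^\perp=H$ by \cite[Theorem 5.8]{brakke}, exactly as before, so $\lm H_\e^{T^\perp\circ S}=T^\perp(\lm H_\e^S)=H$. Steps 4 and 6 are purely algebraic (linearity of $\Pi\mapsto H_\e^\Pi$ and the orthogonality of $H$) and carry over unchanged.

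The delicate steps are 3 and 5, which in the $C^2$ proof used a second-order Taylor expansion of $f$. The substitute is: Step 3 needs $\lm H_\e^{T^\perp\circ S}=\lm H_\e^{2T^\perp}$. From \eqref{S_expansion}, $T^\perp\circ S$ has principal part $Z=\big(\begin{smallmatrix}0&0\\Df&0\end{smallmatrix}\big)$ with remainder $O(\|Df\|^2)\lesssim\|S-T\|^2$; the tilt-excess decay $\int_{|y|\le\e}\|S-T\|^2\,d\|V\|=o(\e^{d+3})$ — note \emph{not} the stronger $C^2$-type decay, but the decay guaranteed by Theorem \ref{thm:tilttex-heightex} under $H\in\cL^p$, which should still be $o(\e^{d+3})$ here — kills the remainder after the $\e^{-d-1}$ prefactor, so it suffices to compare $H_\e^Z$ with $H_\e^{2T^\perp}$, i.e.\ to control $\e^{-d-1}\int_{\cF_\e}\rho'(|z|/\e)\tfrac{|(Df(z))(z)-2f(z)|}{|(z,f(z))|}\,d\cL^d(z)$. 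The main obstacle is precisely here: with only $f\in C^{1,\alpha}$ I cannot expand $f$ to second order, so $(Df(z))(z)-2f(z)$ need not be $o(|z|^2)$ — but it is $o(|z|^{1+\alpha})$ by the fundamental theorem of calculus applied to $t\mapsto f(tz)$ together with continuity of $Df$ (write $f(z)=\int_0^1 (Df(tz))(z)\,dt$, so $(Df(z))(z)-2f(z)=\int_0^1[(Df(z))(z)-2(Df(tz))(z)]\,dt$, whose modulus is $\le|z|\int_0^1\|Df(z)-2Df(tz)\|\,dt\lesssim|z|^{1+\alpha}$ using $\|Df(z)\|\lesssim|z|^\alpha$). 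Dividing by $|(z,f(z))|\gtrsim|z|$ and integrating over $D_\e$ gives $\int_{D_\e}|z|^\alpha\,d\cL^d(z)\lesssim\e^{d+\alpha}$, and multiplying by $\e^{-d-1}$ yields $\e^{\alpha-1}\to0$ since $\alpha<1$ — \emph{this is the sign problem I must watch}: $\e^{\alpha-1}$ blows up, so the naive bound is \textbf{not} enough and one genuinely needs the second-order cancellation. The resolution is to combine the height-excess decay with the parametrization more carefully: one should instead estimate $\int_{D_\e}\tfrac{|(Df(z))(z)-2f(z)|}{|z|}\,d\cL^d(z)$ by splitting off the part controlled by $\|S-T\|$ — indeed $(Df(z))(z)-2f(z)$ can be re-expressed, using $S-T$ as in \eqref{S_expansion} and a telescoping argument along the ray $tz$, as (a vertical vector whose size is) $\lesssim\int_0^1\|S(F(tz))-T\|\,|z|\,dt$ up to an $O(\|Df\|^2|z|)$ error, so that after the area formula and Cauchy--Schwarz the integral is bounded by $\big(\cL^d(D_\e)\big)^{1/2}\big(\int_{|y|\le\e}\|S-T\|^2\,d\|V\|\big)^{1/2}\lesssim\e^{d/2}\cdot o(\e^{(d+3)/2})=o(\e^{d+1})$, which after the $\e^{-d-1}$ prefactor gives the desired $o(1)$. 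Step 5 ($\lm H_\e^{S\circ T^\perp}=0$) is analogous: the principal part of $S\circ T^\perp$ is $\big(\begin{smallmatrix}0&Df^t\\0&0\end{smallmatrix}\big)$, and $\e^{-d-1}\int_{\cF_\e}\rho'(|z|/\e)\tfrac{|Df(z)^t f(z)|}{|(z,f(z))|}\,d\cL^d(z)$ is handled by Cauchy--Schwarz in the form $|Df(z)^t f(z)|\lesssim\|Df(z)\|\,|f(z)|$, writing $|f(z)|\lesssim|z|\int_0^1\|Df(tz)\|dt$, then bounding $\int_{D_\e}\|Df\|^2\,d\cL^d\lesssim\int_{|y|\le\e}\|S-T\|^2\,d\|V\|=o(\e^{d+3})$ via \eqref{eq:pre_tilt_ecess} and the area formula, so the whole expression is $\e^{-d-1}\cdot o(\e^{d+3})\cdot\e^{-1}\cdot\ldots$ — again one must arrange the powers so that $\e^{-d-1}$ times the estimate tends to $0$, which works out because there are effectively two factors each carrying roughly $\e$ (one from $\|Df\|\sim\e$ heuristically and one from $|f|\sim\e^{1+\alpha}$, or rigorously two factors of $\|S-T\|$ in $L^2$). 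Assembling Steps 1--6 exactly as in the previous proof then yields the stated convergence for all the listed operators $\Pi$, completing the proof.
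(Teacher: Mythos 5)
Your plan is genuinely different from the paper's, and unfortunately the difference is not just cosmetic: the place where you feel the pinch (``one genuinely needs the second-order cancellation'') is exactly where your substitute fails. You invoke a tilt-excess decay of the form $\int_{|y|\le\e}\|S-T\|^2\,d\|V\|=o(\e^{d+3})$, attributing it to Theorem~\ref{thm:tilttex-heightex}. But that theorem gives $o(\e^{d+1})$ for the tilt-excess $\int_{B_\e}\|T_yV-T_0V\|^2\,d\|V\|$; the rate $o(\e^{d+3})$ there is for the \emph{height}-excess $\int_{B_\e}|y-T_yV(y)|^2\,d\|V\|$, a different quantity. (Equation~\eqref{eq:step3_gen1prf} in the paper misattributes the $o(\e^{d+3})$ rate to the tilt-excess as well, which may have misled you; one can check directly that even for a $C^2$ graph one has $\|Df(z)\|\sim|z|$, hence $\int_{D_\e}\|Df\|^2\sim\e^{d+2}$, so $o(\e^{d+3})$ cannot be correct.) With the actual rate, your Cauchy--Schwarz in Step~3 yields $\int_{D_\e}\|Df\|\,d\cL^d\lesssim\e^{d/2}\big(\int_{D_\e}\|Df\|^2\big)^{1/2}=\e^{d/2}\,o(\e^{(d+1)/2})=o(\e^{d+1/2})$, and after the prefactor $\e^{-d-1}$ this is $o(\e^{-1/2})$, which blows up. Even using the sharper Allard-regularity estimate $\|Df(z)\|\lesssim|z|^\alpha$ with $\alpha=1-\tfrac{d}{p}<1$, the tilt-excess is only $O(\e^{d+2\alpha})$, and the same Cauchy--Schwarz gives $O(\e^{\alpha-1})\to\infty$. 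So the naive triangle-inequality bound on $(Df(z))(z)-2f(z)$, even boosted by tilt-excess, cannot close Step~3 (and by the same token Step~5). The problem is precisely that $\lm H_\e^{Z}$ and $\lm H_\e^{2T^\perp}$ are each nonzero and you must exhibit cancellation in the difference, not merely bound each piece.

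The paper avoids this obstacle by a completely different decomposition: it invokes the polynomial approximation lemma (Lemma~\ref{poly_approx}, a corollary of Sch\"atzle's theorem) to produce a degree-two polynomial $q$ with $\sup_{D_\e}|f-q|=o(\e^2)$, takes $W$ to be the varifold of the graph of $q$ (for which Propositions~\ref{propbr3.3} and~\ref{propbr3.3_gen1} hold since $q$ is smooth), and then shows $\lm H_\e^{\Pi_V}(0,V)=\lm H_\e^{\Pi_W}(0,W)$ by (i) bounding $\|S_V-S_W\|\lesssim\|\nabla f-\nabla q\|$, and (ii) a Caccioppoli-type energy estimate: testing the divergence-form equations $H(\cdot,V)=\operatorname{div}(\nabla f/\sqrt{1+|\nabla f|^2})$ and $H(\cdot,W)=\operatorname{div}(\nabla q/\sqrt{1+|\nabla q|^2})$ against $(f-q)\eta^2$ with a cutoff $\eta$, to get $\int_{D_\e}|\nabla f-\nabla q|\,d\cL^d=o(\e^{d+1})$ from $\sup|f-q|=o(\e^2)$. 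The Sch\"atzle-type second-order polynomial approximation is the genuine analytic input that replaces the $C^2$ Taylor expansion; tilt-excess decay alone is not strong enough. If you want to salvage your direct approach, you would need to establish a second-order decay for the tilt-excess (something like $\fint_{B_\e}\|S-T-L(y)\|^2=o(\e^2)$ for some linear $L$), which is essentially what Sch\"atzle's result packages; as written, your Steps~3 and~5 do not go through.
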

\begin{proof}
Denote $D_{\e} := \{ z \in T, |z|\leq \e \}$. Let 
 \begin{equation}
 \begin{array}{ll}
 F : & T \rightarrow \R^n\\
  & z \mapsto (z,f(z))
\end{array}
\end{equation}
be a local parametrization of $\supp \|V\|$ near $0$, it exists thanks to Theorem \ref{thm:allard_reg}. From Lemma ~\ref{poly_approx}, we can affirm the existence of a polynomial function $q$ on $T$ satisfying: $ \sup_{D_{\e}} |f-q|=o(\e^2)$. Denote by $W$ the varifold associated to the graph of $q$ near $0$ and by  
\begin{equation}
 \begin{array}{ll}
 Q : & T \rightarrow \R^n\\
  & z \mapsto (z,q(z)).
\end{array}
\end{equation}
a local parametrization of $\supp \|W\|$ near $0$.

\noindent The idea of the proof is to show that
$$\lm H_{\e}^{\Pi_V}(0,V) = \lm H_{\e}^{\Pi_W}(0,W)$$
$\Pi_V$ and $\Pi_W$ have the same form (to be explained) but the first depends on $V$ and the second on $W$. 

\noindent We note that $T_0W=T_0V := T$ as $\nabla q(0) =0$, this comes from the fact that $f$ and $q$ are $C^1$, $\nabla f (0)=0$ and $|f-q|=o(\e^2)$. 

\noindent {\bf Step 1:} We show that 
\begin{equation}
 \|T_{(z,f(z))}V - T_{(z,q(z))}W  \| \lesssim \| \nabla f(z) - \nabla q(z) \| \,\,\text{near $0$}. 
\end{equation}

\noindent We first recall the expressions of the matrices associated to the projections on the tangent spaces (dropping the variable $z$ for simplicity), we have
\begin{equation}
 T_{(z,f(z))}V = DF(DF^tDF)^{-1}DF^t , \quad \text{and} \,\, T_{(z,q(z))}W = DQ(DQ^tDQ)^{-1}DQ^t
\end{equation}
We know that $DF^t DF = I_{n-1} + o(1)$ and $DQ^t DQ = I_{n-1} + o(1)$, hence 
$$ \| DF^t DF\| \lesssim 1,  \quad  \, \text{and} \, \| DQ^t DQ\| \lesssim 1.$$ 
Using $\| A^t \| = \|A\|$ for any matrix $A$ together with $\| DF \| \lesssim 1 $ and  $\| DQ \| \lesssim 1$ we infer 
\begin{equation}\label{eq:prop3.3br_gen2_prf1}
\begin{split}
 \| (DF^t DF)^{-1} - (DQ^tDQ)^{-1} \|
 & \leq  \|(DF^t DF)^{-1} \| \|(DQ^tDQ)^{-1} \| \|  DF^t DF - DQ^tDQ \|
 \\& \lesssim \|  DF^t DF - DQ^tDQ \| \leq \| DF^t \| \| DF - DQ \| + \| DQ \|\| \| DF^t - DQ^t \|
 \\& \lesssim \| DF - DQ \| = \| \nabla f - \nabla q \|.
 \end{split}
\end{equation}
From \ref{eq:prop3.3br_gen2_prf1} we obtain 
\begin{equation}
 \begin{split}
  \|T_{(z,f(z))}V - T_{(z,q(z))}W  \|
  & = \| DF(DF^tDF)^{-1}DF^t - DQ(DQ^tDQ)^{-1}DQ^t \|
  \\& \leq \| DF(DF^tDF)^{-1} \| \| DF^t - DQ^t \| 
  \\& + \| DF \| \|DQ\| \| (DF^t DF)^{-1} - (DQ^tDQ)^{-1} \|
  \\& + \| (DQ^tDQ)^{-1}DQ^t \| \| DF-DQ\|
  \\& \lesssim \|DF-DQ\| = \| \nabla f - \nabla q \|.
 \end{split}
\end{equation}
This finishes the proof of step $1$.

\noindent {\bf Step 2:} We show that 
\begin{equation}
 \int_{D_{\e}} | \nabla f - \nabla q | \, d\cL^d = o(\e^{d+1}). 
 \end{equation}

\noindent Indeed, in the sense of distributions,  we have 
 \begin{equation} 
 \begin{split}
     H(\cdot,V)  &= \text{div}\left(\frac{\nabla f}{(1+|\nabla  f|^2)^\frac{1}{2}}\right)
     \end{split}
     \end{equation}
and,
     \begin{equation}
 \begin{split}
     H(\cdot,W)  &= \text{div}\left(\frac{\nabla q}{(1+|\nabla q|^2)^\frac{1}{2}}\right)
     \end{split}
     \end{equation}
We set : $h=H(\cdot,V)-H(\cdot,W)$ and $\phi=f-q$. Our goal now  is to show that 
\begin{equation}
 \int_{D_{\e}}  |\nabla  \phi| \, d\cL^d = o(\e^{d+1}).
\end{equation}
Define a function $\eta$ on $T$ such that $\eta = 1$ on $D_{\e}$, $0$ outside $D_{2\e}$ and $|\nabla\eta| \leq  2 \e^{-1}$. We test the equation satisfied by $h$ against the function $\phi \eta^2$, we obtain using $|\nabla  f|=o(1)$ and $|\nabla  q|=o(1)$ that 
 \begin{equation}
     \begin{split}
         \int_{D_{2\e}} h \phi \eta^2 \, d\cL^d &= -\int_{D_{2\e}}  (1+o(1)) \nabla  f \cdot \nabla (\phi \eta^2 ) \, d\cL^d - \int_{D_{2\e}}  (1+o(1)) \nabla  q \cdot \nabla (\phi \eta^2 ) \, d\cL^d
         \\& =-(1+o(1)) \left(  \int_{D_{2\e}} \nabla  \phi \cdot \nabla (\phi \eta^2 ) \, d\cL^d \right)
         \\& = -(1+o(1)) \left(  \int_{D_{2\e}} |\nabla \phi|^2   \eta^2 \, d\cL^d  \right) - (1+o(1)) \left(  \int_{D_{2\e}} \phi  \nabla  \phi \cdot \nabla \eta \eta  \, d\cL^d \right).
     \end{split}
 \end{equation}
Using $\phi=o(\e^2)$ and $h\in \cL^1$ (as it belongs to $\cL^p$ and $p \geq 2$), we obtain 
 \begin{equation}
     \begin{split}
        &(1+o(1)) \left(  \int_{D_{2\e}} |\nabla \phi|^2   \eta^2  \, d\cL^d \right) 
        \\&= -\int_{D_{2\e}} h \phi \eta^2 \, d\cL^d 
        - (1+o(1)) \left(  \int_{D_{2\e}} \phi  \nabla  \phi \cdot \nabla \eta \eta \, d\cL^d \right)
        \\& \leq o(\e^{d+2}) + (1/2+o(1)) \int_{D_{2\e}} |\nabla \phi|^2 \eta^2 \, d\cL^d + (1/2+o(1)) \int_{D_{2\e}} \phi^2 |\nabla \eta|^2 \, d\cL^d
     \end{split}
 \end{equation}
 where we used the inequality $ 2ab\leq a^2+b^2$. Using $\phi=o(\e^2)$ and $|\nabla \eta|\leq 2 \e^{-1}$ we obtain 
 \begin{equation}
   \int_{D_{\e}}  |\nabla \phi|^2  \, d\cL^d \leq  \int_{D_{2\e}}  |\nabla \phi|^2 \eta^2 \, d\cL^d = o(\e^{d+2})
 \end{equation}
 finally, by the Cauchy-Schwarz inequality, we obtain : 
 \begin{equation}
     \int_{D_{\e}}  |\nabla \phi| \, d\cL^d = o(\e^{d+1}),
 \end{equation}
and this concludes the proof of step $2$.

\noindent {\bf Step 3} We prove Proposition  ~\ref{propbr3.3_gen2}.

\noindent We recall that $T_0W = T_0V := T$, from ~\eqref{eq:intermediate1_prop3.3br} we infer that for any linear operator $\Pi$ one has
$$ -\lm H_{\e}^{\Pi}(0,W)  = 
 \lm {\e}^{-d-1}\int_{D_{\e}}\rho'\left(\frac{\vert z \vert}{\epsilon}\right)\Pi\left(\frac{(z,q(z))}{\vert (z,q(z)) \vert}\right) \,d \L^d(z). $$
 Similarly for $V$, for any linear operator $\Pi$ one has
$$ -\lm H_{\e}^{\Pi}(0,V)  = 
 \lm {\e}^{-d-1}\int_{D_{\e}} \rho'\left(\frac{\vert z \vert}{\epsilon}\right)\Pi\left(\frac{(z,f(z))}{\vert (z,f(z)) \vert}\right) \, d \L^d(z) .$$
 
\noindent Denote $S_X := T_{\cdot}X$ for any varifold $X$ and  
\begin{equation}
 \left( P_{1,X}, \,  P_{2,X},\,  P_{3,X},\,  P_{4,X},\,  P_{5,X} \right) := \left(  {\rm Id},\, S_X,\, S_X^{\perp},\, T ,\, T^{\perp} \right)
\end{equation}
Let $\Pi_V = P_{i,V} \circ P_{j,V}$ and $\Pi_W = P_{i,W} \circ P_{j,W}$ for some $i$ and $j$ in $\{ 1, \dots , 5 \}$, $i \neq j$. Using  $\| A^t \| = \|A\|$ for any matrix $A$, $\| P_{i,X} \| \leq 1$ for any $i\in \{ 1, \dots , 5 \}$ and any varifold $X$ and the triangle inequality, one can infer that 
$$ \| \Pi_{V} - \Pi_{W} \| \leq 2 \| S_V - S_W \|  $$ 
Then, step $1$ and step $2$ imply that 
\begin{equation}\label{eq:similar_projectors}
 \begin{split}
 & \Big|  \lm H_{\e}^{\Pi_V}(0,V)  - \lm H_{\e}^{\Pi_W}(0,W)  \Big|
 \\& = \lm \e^{-d-1} \Big| \int_{D_{\e}}\rho'\left(\frac{\vert z \vert}{\epsilon}\right)\Pi_V \left(\frac{(z,f(z))}{\vert (z,f(z)) \vert}\right) \,d \L^d(z) - \int_{D_{\e}}\rho'\left(\frac{\vert z \vert}{\epsilon}\right)\Pi_W\left(\frac{(z,q(z))}{\vert (z,q(z)) \vert}\right) \,d \L^d(z) \Big|
 \\& \leq \|\rho'\|_{\infty} \lm \e^{-d-1} \int_{D_{\e}}   \| \Pi_{V}(z,f(z)) - \Pi_{W}(z,q(z)) \|   \,d \L^d(z) 
 \\& \leq 2  \|\rho'\|_{\infty} \lm \e^{-d-1} \int_{D_{\e}}   \| S_V(z,f(z)) - S_W(z,q(z)) \|   \,d \L^d(z) =0.
 \end{split}
\end{equation}
From \eqref{eq:similar_projectors} and Proposition ~\ref{propbr3.3_gen1}  (for $M=W$) we obtain that $\lm H_{\e}^{\Pi_V}(0,V) = 0$ for any $\Pi_V$ in 
\begin{equation}
 \lbrace T, T\circ S_V, S_V\circ T^{\bot}, T\circ S_V^{\bot} \rbrace.
\end{equation}
As both $V$ and $W$ are rectifiable, \cite[Theorem 4.3]{blm1} combined with \eqref{eq:similar_projectors} imply
\begin{equation}
 H(0,V) = \lm H_{\e}^{S_V} (0,V ) = \lm H_{\e}^{S_W} (0,W ) = H(0,W).
\end{equation}
Therefore, 
\begin{equation}
 \lm H_{\e}^{\Pi_V}(0,V) = H(0,W) = H(0,V)
\end{equation}
for any $\Pi_{V}$ in the set
\begin{equation}
 \lbrace S ,\, -2S_V^{\bot}  ,\, 2{\rm Id},\, T^{\bot}\circ S_V, \, -2T^{\bot}\circ S_V, \, 2T^{\bot} \rbrace \cup \lbrace 2S_V^{\bot} \circ T^{\bot}, \, S_V\circ T, \, -S_V^{\bot} \circ T \rbrace,
\end{equation}
and we finish the proof of Proposition ~\ref{propbr3.3_gen2}.
\end{proof}
%


%
%

\subsection{Extention to the second fundamental form}\label{sec:sff_app}
The authors of \cite{blm2} suggested a definition of the second fundamental form and its approximation after the work of Hutchinson \cite{hut1}.   We will give a general definition of the approximate second fundamental form in the same spirit of the definition of $H_{\e}^{\Pi}$.
\vspace{0.3cm}

\noindent Let $V \in V_d(\R^n)$, we define the $G$-linear variation $\delta_{ijk}V : C^{1}(\R^n, \R) \rightarrow \R $ as in \cite[Definition 3.1]{blm2} by: 
\begin{equation}
 \delta_{ijk}V(\phi) := \int_{\R^n \times \G} P_{jk} \,P(\nabla \phi) \cdot e_i \,\, dV(y,P), \,\, \text{for any} \, \phi \in C^1_c(\R^n,\R),
\end{equation}
where $i,j,k \in \{ 1, \dots, n \}$. In case $\delta_{ijk}$ is bounded (as a measure) for every $i,j,k \in, \{1, \dots, n \}$ we can use the Riesz representation Theorem followed by the Radon-Nikodym decomposition to infer the existence of a tensor $\beta^V_{ijk}$ satisfying:
\begin{equation}
 \delta_{ijk} = - \beta^V_{ijk} \V + (\delta_{ijk}V)_s
\end{equation}
where the measure $(\delta_{ijk}V)_s$ is singular w.r.t $\V$. For any $\e\in(0,1)$, the approximate second fundamental form tensor $\left(A_{ijk}^{V,\e}\right)_{ijk}$ is then given by the formula (\cite[Definition 6.2]{blm2})
\begin{equation}
 A_{ijk}^{V,\e} := \beta_{ijk}^{V,\e}-c_{jk}^{V,\e} \left(\left( I+c^{V,\e}\right)^{-1}H_{\e}(\cdot,V)\right)_{i}
\end{equation}
where
\begin{equation}
 \beta_{ijk}^{V,\e}(x):=-\frac{C_{\xi}}{C_{\rho}}\frac{\delta_{ijk}V*\rho_{\e}(x)}{\V*\xi_{\e}(x)}= -\frac{C_{\xi}}{C_{\rho}}\frac{\e^{-d-1}\int_{\R^n\times \G}P_{jk}\rho'\left(\frac{\vert y-x \vert }{\e} \right) P\left( \frac{ y-x}{ \vert y-x \vert}\right) \cdot e_i dV(y,P)}{ \e^{-d}\,\V*\xi_{\e}(x)},
\end{equation}
\begin{equation}
 c_{jk}^{V,\e} := \frac{\left( \int_{\G} P_{jk} \, d \nu_{\cdot}(P) \right) \ast \eta_{\e} }{\V \ast \eta_{\e}}
\end{equation}
and
\begin{itemize}
 \item $\eta \in C^0(\R^+,\R^+)$, positive on $(0,1)$ and supported on $[0,1]$; and
 $$ \eta_{\e}(x) := \eta\left( \frac{|x|}{\e} \right) \,\, \forall x \in \R^n.$$ 
 \item $C_{\rho}$ and $C_{\xi}$ are normalization constants \eqref{def:cxi_crho},
 \item $P$ being the projection matrix on the subspace $P \in \G$,
 \item $\nu_{\cdot}$ is the Grassmann component of $V$.
\end{itemize}
Similarly to the work \cite{br} on the approximate mean curvature, we suggest the following definition:

\begin{dfn}[Generalized approximate second fundamental form]

\noindent Let $V\in V_d(\R^n)$, $\e\in(0,1)$. We define a generalized approximate second fundamental form by 
\begin{equation}
 A_{ijk}^{V,\Pi,\e} := \beta_{ijk}^{V,\Pi,\e}-c_{jk}^{V,\e} \left(\left( I+c^{V,\e}\right)^{-1}H_{\e}(\cdot,V)\right)_{i}
\end{equation}
where
$$ \beta_{ijk}^{V,\Pi,\e}(x) =  -\frac{C_{\xi}}{C_{\rho}}\frac{\e^{-d-1}\int_{\R^n\times \G}P_{jk}\rho'\left(\frac{\vert y-x \vert }{\e} \right) \Pi_y \left( \frac{ y-x}{ \vert y-x \vert}\right) \cdot e_i \,dV(y,P)}{\e^{-d} \, \V*\xi_{\e}(x)},$$
and $\Pi_{y}$ is a linear operator for any $y\in \supp \|V\|$.
\end{dfn}

We recall the notations $T := T_0V$ and $S=T_yV$. In \cite[Proposition 6.7]{blm2} it was proven that $\lm \beta_{ijk}^{V, S,\e} = \beta_{ijk}^{V}$ in the rectifiable setting. In the following proposition, we show the convergence of $\beta_{ijk}^{V,\Pi}$ to $\beta_{ijk}^{V,\e}$ for  $\Pi=S\circ T^{\perp}$ for unit density varifolds with some integrability condition on the mean curvature similar to Proposition ~\ref{propbr3.3_gen2}.

\begin{prop}[Approximation of the SFF]
Let $V$ be a $(n-1)$-integral unit density varifold in $\R^{n}$, assume that $ (\delta V)_s \equiv 0, \, H(\cdot,V) \in \cL^p(\V) $ with $p > 2(n-1)$.
One has,
$$ \lm \beta_{ijk}^{V,S,\e} = \beta_{ijk}^{V}, \quad \lm \beta_{ijk}^{V,S\circ T,\e} = \beta_{ijk}^{V} \quad \text{and} \quad
\lm \beta_{ijk}^{V,S\circ T^{\bot},\e} = 0.$$
\end{prop}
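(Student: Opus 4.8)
The plan is to follow the proof of Proposition~\ref{propbr3.3_gen2}. The starting point is that $\Pi\mapsto\beta_{ijk}^{V,\Pi,\e}$ is linear (the Grassmann factor $P_{jk}$ does not involve $\Pi$) and that $S\circ T+S\circ T^{\bot}=S$, so $\beta_{ijk}^{V,S\circ T,\e}=\beta_{ijk}^{V,S,\e}-\beta_{ijk}^{V,S\circ T^{\bot},\e}$. Since $V$ is rectifiable, \cite[Proposition 6.7]{blm2} gives $\lm\beta_{ijk}^{V,S,\e}=\beta_{ijk}^{V}$; hence everything reduces to showing $\lm\beta_{ijk}^{V,S\circ T^{\bot},\e}=0$, after which the $S\circ T$ limit follows by subtraction.

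I would first check that the reduction lemmas~\ref{lemma:ellipse} and~\ref{lemma1_prop3.3br} carry over to $\beta_{ijk}^{V,\Pi,\e}$ verbatim: the integrand now carries, in addition, the bounded scalar $P_{jk}$ and the projection $\cdot\,e_i$, and since $|P_{jk}|\le1$ and $|v\cdot e_i|\le|v|$, the height-excess and tilt-excess estimates, the area formula, and the passage from $\mathcal{F}_\e$ to $D_\e$ all go through with the same $o(\e^{d+1})$ errors. Writing $F(z)=(z,f(z))$ for the local Allard graph of $V$ near $0$ (Theorem~\ref{thm:allard_reg}), $P=S(F(z))=T_{F(z)}V$, $C_\rho\equiv1$ and $\e^{-d}\,\V\ast\xi_\e(0)\to C_\xi$, this gives, for any $\Pi$ built from $S$ and $T$,
\[
 \lm\beta_{ijk}^{V,\Pi,\e}(0,V)
 = -\lm\e^{-d-1}\int_{D_\e} S(F(z))_{jk}\,\rho'\!\left(\frac{|z|}{\e}\right)
 \left[\Pi_{F(z)}\!\left(\frac{(z,f(z))}{|(z,f(z))|}\right)\right]_i d\L^d(z),
\]
with $D_\e=\{z\in T:\ |z|\le\e\}$, i.e.\ the analogue of~\eqref{eq:intermediate1_prop3.3br}.

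Next I would introduce the polynomial $q$ of Lemma~\ref{poly_approx} with $\sup_{D_\e}|f-q|=o(\e^2)$, the varifold $W$ associated to the graph of $q$ near $0$, and $Q(z)=(z,q(z))$, noting $T_0W=T_0V=T$. Exactly as in Steps~1 and~2 of the proof of Proposition~\ref{propbr3.3_gen2} one has $\|S_V-S_W\|\lesssim\|\nabla f-\nabla q\|$ and $\int_{D_\e}|\nabla f-\nabla q|\,d\L^d=o(\e^{d+1})$; together with the pointwise bound $|f(z)-q(z)|=o(|z|^2)$ (which follows from $\sup_{D_r}|f-q|=o(r^2)$ and tames the mild $|z|^{-1}$ singularity of the unit direction vector at $z=0$) and with $|P_{jk}|\le1$, this yields the transfer identity $\lm\beta_{ijk}^{V,S_V\circ T^{\bot},\e}(0,V)=\lm\beta_{ijk}^{W,S_W\circ T^{\bot},\e}(0,W)$, the counterpart of~\eqref{eq:similar_projectors}. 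Finally, $q$ is a polynomial with $q(0)=0$, $Dq(0)=0$, so $|q(z)|=O(|z|^2)$, $\|Dq(z)\|=O(|z|)$, and from~\eqref{S_expansion}, $S_W\circ T^{\bot}=\begin{pmatrix}0&Dq^{t}\\0&0\end{pmatrix}+O(\|Dq\|^2)I_n$; hence in the displayed formula (with $V$ replaced by $W$) the integrand is $O(|z|^2)$, the integral is $O(\e^{d+2})$, and after the $\e^{-d-1}$ scaling it tends to $0$ (the $O(\|Dq\|^2)$ contribution may alternatively be absorbed via the tilt-excess decay, as in Step~5 of the proof of Proposition~\ref{propbr3.3}). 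This gives $\lm\beta_{ijk}^{W,S_W\circ T^{\bot},\e}(0,W)=0$, hence $\lm\beta_{ijk}^{V,S\circ T^{\bot},\e}=0$, and then $\lm\beta_{ijk}^{V,S\circ T,\e}=\beta_{ijk}^{V}$.

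The step I expect to be the main obstacle is the transfer identity: checking that, after the reductions, the integrals for $V$ and for $W$ differ by $o(\e^{d+1})$ requires controlling simultaneously the bounded factor $P_{jk}$, the varifold-dependent operator $S\circ T^{\bot}$, and the two unit direction vectors $(z,f(z))/|(z,f(z))|$ and $(z,q(z))/|(z,q(z))|$ near $z=0$ — this is essentially the bookkeeping of~\eqref{eq:similar_projectors} with one extra bounded scalar, but the behaviour near the origin has to be handled with the pointwise $o(|z|^2)$ bound rather than the uniform $o(\e^2)$ one.
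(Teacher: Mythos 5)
Your proposal follows the same route as the paper's (admittedly terse) ``Elements of the proof'': linearity reduction to $S\circ T^{\bot}$, verification that the $H_\e^\Pi$ reduction lemmas carry over with the extra bounded factors $P_{jk}$ and $\cdot\,e_i$, then the polynomial-approximation transfer of Proposition~\ref{propbr3.3_gen2} and a direct $O(|z|^2)$ estimate for the graph of~$q$. Your explicit treatment of the unit-direction difference $(z,f(z))/|(z,f(z))| - (z,q(z))/|(z,q(z))|$ via the pointwise bound $|f(z)-q(z)|=o(|z|^2)$ is a welcome tightening of a step that the paper's argument (already in the last display of~\eqref{eq:similar_projectors}) passes over silently.
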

\begin{proof}[Elements of the proof]
By linearity of the map $\Pi \mapsto \beta_{ijk}^{V,\Pi,\e}$, it is enough to show that: 
\begin{equation*}
 \lm \beta_{ijk}^{V,S\circ T^{\bot},\e} = 0 \quad (\text{as} \lm \beta_{ijk}^{V,S,\e} = \beta_{ijk}^{V}).
\end{equation*}
To do so, and as done before for the mean curvature,
we set $x=0$ and eliminate the denominator and the constant $C_{\rho}$ to have the following simpler formula:
$$ \lm \beta_{ijk}^{V,S \circ T^{\perp},\e} = - \lm \e^{-d-1}\int\limits_{\R^n \times \G}P_{jk}\rho'\left(\frac{\vert y \vert }{\e} \right) S \circ T^{\perp} \left( \frac{ y}{ \vert y \vert}\right) \cdot e_i \,\,dV(y,P). $$
We notice that as $V$ is rectifiable, $V = \V \otimes \delta_S$ and  
$$\int\limits_{\R^n \times \G}P_{jk}\rho'\left(\frac{\vert y \vert }{\e} \right) S \circ T^{\perp} \left( \frac{ y}{ \vert y \vert}\right) \cdot e_i \,dV(y,P) = \int\limits_{\R^n }S_{jk}\rho'\left(\frac{\vert y \vert }{\e} \right) S \circ T^{\perp} \left( \frac{ y}{ \vert y \vert}\right) \cdot e_i \,d\|V\|(y).$$

\noindent In the $C^2$ case, similarly to the proof of Propositions ~\ref{propbr3.3} and ~\ref{propbr3.3_gen1}, we use the area formula and the height-excess decay to write down the integral w.r.t $z(\in T)$ and get rid of the Jacobian of the parametrization in the integral. We then use the identity \eqref{S_expansion}, \eqref{eq:intermediate1_prop3.3br} and the tilt-excess decay to conclude.\\
In the general case, similarly to the proof of Proposition \ref{propbr3.3_gen2} we use the polynomial approximation lemma (Lemma \ref{poly_approx}) to prove that, denoting $S_V=T_{y}V$ and $S_W=T_yW$ for simplicity
\begin{equation}
\beta_{ijk}^{V,S_V\circ T^{\bot},\e} = \beta_{ijk}^{V,S_W\circ T^{\bot},\e} = 0
\end{equation}
where $W$ is the varifold associated to the polynomial function approximating the graph of $\supp \V$ near $0$.
\end{proof}
%


\section{Comparison principles for the continuous and discrete motions of point clouds} \label{sec:comp_ppl}


The authors of \cite{br} constructed several schemes for motions of points clouds by approximate mean curvature and proved sphere comparison principles to internal varifolds. We will recall the definitions and the result of \cite{br} on and provide some extensions.

\noindent In the sequel, $\rho$ and $\xi$ are assumed to be natural kernel pair \eqref{nkpcondition}, a quick computation shows that $ \frac{C_{\rho}}{C_{\xi}}=-\frac{d}{n}$, this justifies formulas \eqref{def:cont_motion_exp}, \eqref{def:disc_motion1_exp2} and \eqref{def:disc_motion2_exp2} .


\subsection{Continuous motion of points cloud varifolds}


Given a points cloud $d$-varifold $V=\sum\limits_{i=1}^{N}m_i\delta_{(x_i,P_i)}$ in $\R^n$; a continuous motion of points cloud varifolds by approximate mean curvature flow starting from $V$ is a family of varifolds $\left(V(t)\right)_{t\geq0}$ with $V(0)=V$ and:
$$ V(t)=\sum\limits_{i=1}^{N} m_i(t)\delta_{(x_i(t),P_i(t))} \quad \text{and} \quad X(t)=\left( x_1(t) \dots x_N(t)\right) \in \R^{nN}$$
such that
$$ \frac{d}{dt}x_i(t)=H_{\e}^{\Pi}(x_i(t),V(t)) $$
where $H_{\e}^{\Pi}$ the approximate mean curvature associated to the linear operator $\Pi$ \eqref{comp:def_mc}. The evolution equation turns into : 
\begin{equation}\label{def:cont_motion}
    \frac{d}{dt} x_i(t)=\frac{1}{\e} \sum\limits_{j=1}^N \omega_{ij}(t) \Pi_{ij}(t) (x_j(t)-x_i(t)), \quad i=1 \ldots N
\end{equation}
where
\begin{equation}\label{def:cont_motion_exp}
\omega_{ij}(t)=-\frac{d}{n}\frac{m_j(t)\rho'\left(\frac{|x_j(t)-x_i(t)|}{\e}\right)\frac{1}{|x_j(t)-x_i(t)|}}{\sum\limits_{l=1}^N m_l(t)\xi\left(\frac{|x_j(t)-x_i(t)|}{\e}\right)} \,\, \text{for} \,\, i \neq j, \,\, \omega_{ii}=0,
\end{equation}
and the $\Pi_{ij}$'s are the coefficients of the matrix associated with $\Pi$.\\

\noindent  The existence of such motion may not be guaranteed in every case, it depends on the definition of the masses and the tangents. In case the previous quantities are Lipschitz functions on the positions of the points, the motion exists for at least a short time interval. The proofs provided by Buet and Rumpf in \cite{br} are valid for any definition of $\{m_i\}_i$ and $\{P_i\}_i$ throughout the evolution, hence we keep these definitions implicit.


\noindent We start by recalling \cite[Proposition 5.2]{br} on the sphere barrier to internal varifolds principle satisfied by the continuous motion.
\begin{prop}[Sphere barrier to internal varifolds, Proposition 5.2 \cite{br}]\noindent \label{prop:comparison_intvar}
\noindent Let $(V(t))_{t\geq 0}$ be a continuous motion of points clouds defined by ~\eqref{def:cont_motion}, for $\Pi = 2{\rm Id}$
one has 
\begin{equation}
 \supp \|V(0)\| \subset B(a,R) \quad \implies \supp \|V(t)\| \subset B(a,\sqrt{R(0)^2-2dt}), \, \forall t \in [0,R(0)^2/2d].
\end{equation}
\end{prop}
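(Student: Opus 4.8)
The plan is to show that the maximal radius function $R(t) := \max_i |x_i(t) - a|$ is controlled from above by the function $t \mapsto \sqrt{R(0)^2 - 2dt}$, using a differential-inequality (Gronwall-type) argument on the point that realizes the maximum distance to the center $a$. Without loss of generality I would take $a = 0$. Since $t \mapsto R(t)^2$ is locally Lipschitz (being a finite max of smooth functions $t \mapsto |x_i(t)|^2$), it is differentiable a.e., and at a time $t$ of differentiability, if $i_0$ is an index achieving the max, then $\frac{d}{dt} R(t)^2 = \frac{d}{dt}|x_{i_0}(t)|^2 = 2 x_{i_0}(t) \cdot \frac{d}{dt} x_{i_0}(t)$.

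Next I would plug in the evolution equation \eqref{def:cont_motion} with $\Pi = 2\,\mathrm{Id}$, so $\Pi_{ij}(t) = 2I_n$, giving
\begin{equation*}
\frac{d}{dt} R(t)^2 = \frac{4}{\e} \sum_{j=1}^N \omega_{i_0 j}(t)\, x_{i_0}(t) \cdot (x_j(t) - x_{i_0}(t)).
\end{equation*}
The geometric key point is that, since $|x_j(t)| \le |x_{i_0}(t)|$ for all $j$, one has $x_{i_0} \cdot (x_j - x_{i_0}) = x_{i_0}\cdot x_j - |x_{i_0}|^2 \le |x_{i_0}||x_j| - |x_{i_0}|^2 \le 0$. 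To conclude I need the weights $\omega_{i_0 j}(t)$ to be nonnegative: this is exactly where the natural-kernel-pair hypothesis and the sign of $\rho'$ enter. Since $\rho$ is nonnegative, smooth, supported on $[0,1]$, and $-s\rho'(s) = n\xi(s) \ge 0$ forces $\rho' \le 0$ on $\R^+$; together with $m_j(t) > 0$ and the positive denominator $\sum_l m_l \xi(\cdot) > 0$, formula \eqref{def:cont_motion_exp} gives $\omega_{i_0 j}(t) = -\frac{d}{n}\, m_j \rho'(\cdot)/(|x_j - x_{i_0}|\sum_l m_l\xi(\cdot)) \ge 0$. Hence every term in the sum is $\le 0$, so $\frac{d}{dt}R(t)^2 \le 0$ — already this shows the ball $B(0,R(0))$ is preserved.

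To get the sharp rate I would extract a more quantitative bound by keeping the contribution of the term $-|x_{i_0}|^2$ for the indices $j$ close to $x_{i_0}$ (those within distance $\e$, for which $\rho'$ may be nonzero), and comparing $\sum_j \omega_{i_0 j}|x_{i_0}|$ against the normalization that makes $\frac{1}{\e}\sum_j \omega_{i_0 j}(x_j - x_{i_0})$ an approximation of $H(x_{i_0}, V(t))$, whose relevant component is bounded in the Brakke-flow comparison setting by $-d/R(t)$. Concretely, one shows $\frac{d}{dt}R(t)^2 \le -2d$ (mirroring the exterior sphere barrier for Brakke flow in the Theorem of Brakke quoted above), and then integrating and applying Gronwall's lemma yields $R(t)^2 \le R(0)^2 - 2dt$, i.e. $\supp\|V(t)\| \subset B(a, \sqrt{R(0)^2 - 2dt})$ for $t \in [0, R(0)^2/2d]$. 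The main obstacle I anticipate is making the passage from the discrete sum $\frac{1}{\e}\sum_j \omega_{i_0 j}(x_j - x_{i_0})\cdot x_{i_0}$ to the clean bound $-d$ rigorous: this requires carefully exploiting the natural-kernel-pair identity \eqref{nkpcondition} so that the $\rho'$-weights and the $\xi$-mass in the denominator telescope correctly, and handling the geometry of which points $x_j$ lie inside the sphere of radius $R(t)$ versus strictly inside — essentially a discrete analogue of the monotonicity computation underlying Brakke's sphere barrier.
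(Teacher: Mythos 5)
Your overall strategy --- set $a=0$, track $R(t)^2 = \max_i |x_i(t)|^2$, observe it is locally Lipschitz, differentiate at a maximizing index $i_0$, and close a differential inequality --- is exactly the scheme behind the cited Proposition 5.2 of \cite{br}, and it is mirrored in this paper's proof of the companion external barrier (Proposition \ref{prop:comparison_extvar}), where one differentiates $r(t)^2 = \min_i |x_i(t)|^2$ instead. Your derivation of $\omega_{i_0 j}(t)\ge 0$ from $\rho'\le 0$ and the natural kernel pair hypothesis is also correct and is a necessary ingredient.

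There is, however, a genuine gap where you pass from $\tfrac{d}{dt}R(t)^2 \le 0$ to the sharp bound $\tfrac{d}{dt}R(t)^2 \le -2d$. Your inequality $x_{i_0}\cdot(x_j-x_{i_0}) \le 0$ is not quantitative enough, and the route you sketch for the sharp rate --- comparing $\frac{1}{\e}\sum_j\omega_{i_0j}(x_j-x_{i_0})$ to the mean curvature $-d/R$ of the enclosing sphere in the ``Brakke-flow comparison setting'' --- is not how the argument goes: no approximation property of $H_\e$ enters, the computation is exact. Two ingredients close the gap. First, for $\Pi = 2\,{\rm Id}$ and $|x_j|\le|x_{i_0}|$ one has the identity
\[
\Pi_{i_0j}(x_j - x_{i_0})\cdot x_{i_0} = 2(x_j-x_{i_0})\cdot x_{i_0} = \bigl(|x_j|^2 - |x_{i_0}|^2\bigr) - |x_j-x_{i_0}|^2 \le -|x_j-x_{i_0}|^2,
\]
which is the internal-ball mirror of $(x_i-x_j)\cdot x_i \le \tfrac12|x_i-x_j|^2$ used in the proof of Proposition \ref{prop:comparison_extvar}. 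Second, combining with $\omega_{i_0j}\ge 0$ gives
\[
\tfrac12\tfrac{d}{dt}|x_{i_0}(t)|^2 \le -\frac{1}{\e}\sum_{j=1}^N \omega_{i_0j}(t)\,|x_j(t)-x_{i_0}(t)|^2,
\]
and the right-hand side equals $-d$ \emph{exactly}: plugging \eqref{def:cont_motion_exp} for $\omega_{i_0j}$ and invoking $-s\rho'(s)=n\xi(s)$ (which also forces $\xi(0)=0$, so the $j=i_0$ term in the $\xi$-denominator vanishes and does not spoil the cancellation), the $\rho'$-weights telescope against the $\xi$-normalization, as in the displayed chain of equalities in the proof of Proposition \ref{prop:comparison_extvar}. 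Integrating $\tfrac{d}{dt}R(t)^2 \le -2d$ a.e.\ on $[0,R(0)^2/2d]$ then yields the claim. In short, the plan was sound, but both the sharpened pointwise inequality and the exact (not approximate) telescoping via the natural kernel pair were left unproved in your write-up.
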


The continuous motion also satisfies the sphere barrier to external varifolds principle, it is stated as follows:
\begin{prop}[Sphere barrier to external varifolds] \label{prop:comparison_extvar} Let $(V(t))_{t\geq 0}$ be a continuous motion of points clouds defined by ~\eqref{def:cont_motion}, for $\Pi = 2{\rm Id}$
one has 
\begin{equation}
 \supp \| V(0)\| \subset B(a,R)^c \quad \implies \supp \|V(t)\| \subset B(a,\sqrt{R(0)^2-2dt})^c, \, \forall t \in [0,R(0)^2/2d].
\end{equation}
\end{prop}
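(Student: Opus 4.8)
The plan is to mimic the proof of Proposition~\ref{prop:comparison_intvar} (the internal-varifold case) with the inequalities reversed, working with the point $x_{i_0}(t)$ that is \emph{closest} to the center $a$ rather than the farthest. Set $r_i(t) = |x_i(t) - a|$ and let $R(t) = \min_i r_i(t)$; the goal is to show that as long as the motion exists, $\frac{d}{dt} R(t)^2 \geq -2d$ at any time when $R(t)$ is realized, which then integrates to $R(t)^2 \geq R(0)^2 - 2dt$ and gives the claimed exclusion of $\supp\|V(t)\|$ from the shrinking ball. First I would, at a time $t$ and index $i_0$ achieving the minimum, compute $\frac{d}{dt}\tfrac12 r_{i_0}^2 = (x_{i_0} - a)\cdot \frac{d}{dt}x_{i_0}$, plug in the evolution equation \eqref{def:cont_motion} with $\Pi = 2\,\mathrm{Id}$, and use $\omega_{i_0 j} \geq 0$ (which holds because $\rho' \leq 0$ on $[0,1]$ and the natural kernel pair relation \eqref{nkpcondition} makes the sign explicit) together with the fact that, for $\Pi = 2\,\mathrm{Id}$, $\Pi_{i_0 j}(x_j - x_{i_0}) = 2(x_j - x_{i_0})$, to get
\begin{equation*}
 \frac{d}{dt}\tfrac12 r_{i_0}^2 = \frac{2}{\e}\sum_{j=1}^N \omega_{i_0 j}\,(x_{i_0}-a)\cdot(x_j - x_{i_0}).
\end{equation*}

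The key geometric step is the elementary inequality $(x_{i_0}-a)\cdot(x_j - x_{i_0}) \geq \tfrac12\big(|x_j-a|^2 - |x_{i_0}-a|^2\big) - \tfrac12|x_j - x_{i_0}|^2$, obtained from the polarization identity $2(x_{i_0}-a)\cdot(x_j-x_{i_0}) = |x_j-a|^2 - |x_{i_0}-a|^2 - |x_j-x_{i_0}|^2$. Since $i_0$ realizes the minimum distance, $|x_j - a|^2 - |x_{i_0}-a|^2 \geq 0$ for every $j$, so the first term drops out (with the correct sign) and we are left with a lower bound
\begin{equation*}
 \frac{d}{dt}\tfrac12 r_{i_0}^2 \geq -\frac{1}{\e}\sum_{j=1}^N \omega_{i_0 j}\,|x_j - x_{i_0}|^2 = \frac{d}{n}\,\frac{\sum_j m_j\,\big(-\e^{-1}|x_j-x_{i_0}|\,\rho'(|x_j-x_{i_0}|/\e)\big)}{\sum_l m_l\,\xi(|x_l - x_{i_0}|/\e)}.
\end{equation*}
Now the natural kernel pair condition $-s\rho'(s) = n\xi(s)$ is invoked to replace $-\e^{-1}|x_j - x_{i_0}|\rho'(\cdots)$ by $n\,\xi(|x_j-x_{i_0}|/\e)$, after which the numerator and denominator match up to the factor $n$, and the whole ratio collapses to exactly $\tfrac{d}{n}\cdot n = d$. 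Hence $\frac{d}{dt}\tfrac12 r_{i_0}^2 \geq -d$, i.e. $\frac{d}{dt} R(t)^2 \geq -2d$ in the appropriate (Dini / one-sided) sense.

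The last step is to pass from this differential inequality at the minimizing index to a statement about $R(t) = \min_i r_i(t)$ itself: the minimum of finitely many $C^1$ functions is locally Lipschitz, and its lower-right Dini derivative at any $t$ equals $\min\{\frac{d}{dt}r_i^2(t) : i \text{ achieves the min}\}$, so the bound above holds for $D^+ R(t)^2$ and Gronwall-type integration yields $R(t)^2 \geq R(0)^2 - 2dt$ on $[0, R(0)^2/2d]$. I expect the main obstacle to be not any single estimate but the bookkeeping around the minimizing index — making the Dini-derivative argument rigorous when the minimizer switches, and ensuring one has the inequality in the right direction at switching times (this is why one works with $\min$ and the lower Dini derivative rather than differentiating $R$ naively). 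A secondary point to handle carefully is that the argument needs $R(0) > 0$ (the initial cloud genuinely avoids $B(a,R)$, in fact lies in the open complement or one works on the closed complement consistently), and that the masses $m_j(t)$ and the denominator $\sum_l m_l \xi(\cdots)$ stay positive along the flow, which is where one leans on the standing assumptions of \cite{br} that keep these quantities well-defined and the motion defined on the time interval in question.
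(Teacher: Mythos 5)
Your proposal is correct and essentially reproduces the paper's own argument: the paper also sets $z=0$, works with the minimizing index, uses the same elementary inequality (the paper writes it as $(x_i-x_j)\cdot x_i \le \tfrac12|x_i-x_j|^2$ using $|x_i|\le|x_j|$, which is your polarization identity), invokes the natural-kernel-pair relation to make the weighted sum collapse to $d$, and integrates to $r(t)^2 \ge (r^0)^2 - 2dt$; the paper merely factors its argument through an intermediate constant $c(t)$ valid for general $\Pi$ before specializing to $\Pi = 2\,\mathrm{Id}$. One small slip: your second display evaluates the ratio as $+d$, but since $\omega_{i_0 j}\ge 0$ the quantity $-\tfrac1\e\sum_j\omega_{i_0 j}|x_j-x_{i_0}|^2$ equals $-d$; your stated conclusion $\tfrac{d}{dt}\tfrac12 r_{i_0}^2 \ge -d$ is nevertheless the correct one.
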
  

\begin{proof}
The proof is similar to the proof of the sphere barrier to internal varifolds principle, we just interchange $\max$ and $\min$ in  the definitions of $c(t)$ and $R(t)$ (cf. proof of  \cite[Proposition 5.2]{br}).

\noindent Without loss of generality, one can assume that $z=0$. 
For a points cloud $X^0 = \{ x_i^0 \}_{i=1}^N \subset \R^n$ define $r^0 = \min_{i=1,\ldots,N} |x_i^0 |$ and assume that
$(X(t))_{0 \leq t <T}$ is a continuous motion by approximate mean curvature with $X(0) = X^0$. We first prove that $r(t) = \min_{i=1,\ldots,N} |x_i(t)|$ fulfills
\[r(t) \geq \sqrt{(r^0)^2 - 2d \int_0^t c(s) \mathrm{d}s}
\]
with 
\begin{equation} \label{eq:Pt}
c(t) = \max \left\lbrace \left. \frac{ \Pi_{ij}(t)(x_i(t) - x_j(t)) \cdot x_i(t) }{| x_i(t) - x_j(t) |^2  } \: \right| \: \begin{array}{l}
i \in \{1, \ldots, N \}, \: \displaystyle |x_i(t) | = r(t)  \text{ and } \\
j \in \{1, \ldots, N \}, \: \displaystyle 0< | x_i(t) - x_j(t) | <  \epsilon 
\end{array}   \right\rbrace \: .
\end{equation}
where $\Pi$ is general.\\

\noindent Indeed, choose $i \in \{ 1, \dots, N\} $ with $r(t) = |x_i(t)|$. We have : 
\begin{align*}
\tfrac12 \tfrac{\mathrm{d}}{\mathrm{d}t} |x_i(t)|^2 &= \tfrac{\mathrm{d}}{\mathrm{d}t} x_i(t) \cdot x_i(t) 
= \frac{1}{\epsilon} \sum_{j=1}^N \omega_{ij}(t) \Pi_{ij}(t) \left( x_j(t) - x_i(t) \right) \cdot x_i(t) \\
&\geq -\frac{c(t)}{\epsilon} \sum_{j=1}^N \omega_{ij}(t) |x_i(t)-x_j(t) |^2 \\
&=  \frac{c(t)d}{n} \sum_{j=1}^N \frac{ \displaystyle m_j(t) \rho^\prime \left(\frac{|x_j(t) - x_i(t) |}{\epsilon} \right) \frac{| x_j(t) - x_i(t) |}{\epsilon} }{\displaystyle \sum_{l=1}^N m_l(t) \xi \left( \frac{|x_l(t) - x_i(t) |}{\epsilon} \right)} 
= - c(t) d\:,
\end{align*} 
where we used \eqref{nkpcondition}. Integrating w.r.t $t$  we obtain
\begin{equation}\label{eq:pre_extvar}
 r(t)^2 \geq (r^0)^2 - 2d \int_0^t c(s) \mathrm{d}s
\end{equation}
which proves the claim.
\end{proof}

\noindent In case $\Pi=2 {\rm Id}$ one can prove that $c(t)\leq 1$. Indeed
\begin{equation}
(x_i-x_j)\cdot(x_i)=|x_i|^2-(x_j)\cdot(x_i)\leq \frac12|x_i|^2 - (x_j)\cdot(x_i) +\frac{1}{2}|x_j|^2 =\frac12|x_i-x_j|^2.
\end{equation}
Hence $c(t)\leq 1$ and 
$$ r(t)^2 \geq (r^0)^2 - 2dt $$ 
this finishes the proof of Proposition ~\ref{prop:comparison_extvar}.
From the sphere barrier principles for external and internal varifolds, we obtain the following result:
\begin{cor} If a $d$-points cloud varifold is contained in a sphere of radius $R(0)$, then, its continuous motion by the approximate mean curvature (when it exists) is contained in the evolution of the sphere of radius $R(t)$ with
\begin{equation*}
 R(t)^2=R(0)^2-2dt.
\end{equation*}
\end{cor}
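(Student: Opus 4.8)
The plan is to obtain the statement as an immediate consequence of the two sphere-barrier principles already established, namely Proposition~\ref{prop:comparison_intvar} (internal) and Proposition~\ref{prop:comparison_extvar} (external), both applied with $\Pi = 2\,{\rm Id}$. First I would translate so that the center of the sphere is the origin, and read the hypothesis ``contained in a sphere of radius $R(0)$'' as $\supp\|V(0)\| \subset \partial B(0,R(0))$, i.e. $|x_i^0| = R(0)$ for every point of the cloud. Since $\partial B(0,R(0))$ is contained both in the closed ball $\overline{B(0,R(0))}$ and in the complement $B(0,R(0))^c$ of the open ball, the hypotheses of both propositions hold simultaneously.

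Next I would apply the two propositions to the motion $(V(t))_t$ on any interval $[0,t]$ with $t \le R(0)^2/2d$ on which it exists. Setting $R(t) := \sqrt{R(0)^2 - 2dt}$, Proposition~\ref{prop:comparison_intvar} yields $\supp\|V(t)\| \subset \overline{B(0,R(t))}$ while Proposition~\ref{prop:comparison_extvar} yields $\supp\|V(t)\| \subset B(0,R(t))^c$; intersecting these two inclusions gives $\supp\|V(t)\| \subset \partial B(0,R(t))$, that is, $|x_i(t)| = R(t)$ for every $i$. This says precisely that the moving cloud stays on the sphere of radius $R(t)$ centered at the origin, where $R(t)$ satisfies $R(t)^2 = R(0)^2 - 2dt$, the closed-form evolution of a round sphere under mean curvature flow; re-inserting the center $a$ gives the claim.

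There is no genuine difficulty here: the corollary is a formal combination of the two propositions, so the only points deserving a word are bookkeeping ones — reconciling the open/closed-ball convention in the two inclusions (harmless, since $\partial B$ lies in $\overline{B}$ and in the complement of the open ball regardless of convention), and the degenerate endpoint $t = R(0)^2/2d$, where $R(t) = 0$ and the conclusion reads $\supp\|V(t)\| = \{a\}$, consistent with the extinction of the exact spherical flow. Accordingly the main obstacle is merely to have the two barrier principles in hand, which is already done above.
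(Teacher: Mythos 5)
Your proof is correct and matches the paper's (implicit) argument: the corollary is presented as an immediate consequence of the internal (Proposition~\ref{prop:comparison_intvar}) and external (Proposition~\ref{prop:comparison_extvar}) sphere-barrier principles, intersected to pin the cloud to the sphere of radius $\sqrt{R(0)^2-2dt}$. Your extra care about the open/closed-ball conventions and the extinction time $t=R(0)^2/2d$ is harmless bookkeeping the paper leaves unstated.
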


\subsection{Discrete (in time) motions of points cloud}
We now consider the time discretizations of the equation \eqref{def:cont_motion} and the comparison principles of the associated motions. Let us consider $\tau >0$ to be the time step, we start with the implicit schemes (implicit with respect to the positions) and we propose (following \cite{br}) the following schemes.
\begin{align}\label{def:disc_motion1}
 x_i^{k+1}=x_i^{k}+\tau H_{\e}^{\Pi} \left(x_i^{k+1},V\right)
\end{align}
where
\begin{equation}
 V=\sum\limits_{j=1}^{N}m_i^{k} \delta_{(x_i^{k+1},P_i^k)} \quad \text{or} \quad  V=\sum\limits_{j=1}^{N}m_{i}^{k+1} \delta_{(x_i^{k+1},P_i^k)} 
\end{equation}
$X^k := \left( x_1^k,\dots,x^k_N \right) \in \R^{nN}$ being the positions at time $t_k := k\tau$. One could also consider the linear operator $(P_i^k)_{i,k}$ to be implicit and the comparison principle below would still hold, to make it simple we keep them explicit. The evolution equation, the case where the masses and the projectors are explicit and the positions implicit can be written as follows: for $i \in \{ 1, \dots, N \}$
\begin{equation}\label{def:disc_motion1_exp1}
 x_i^{k+1} = x_i^k + \frac{\tau}{\e} \sum_{j=1}^{N} \omega_{ij}^{k} \Pi_{ij}^k\left( x_j^{k+1} - x_i^{k+1} \right)
\end{equation}
with 
\begin{equation}\label{def:disc_motion1_exp2}
\omega_{ij}^k=-\frac{d}{n}\frac{m_j^k\rho'\left(\frac{|x_j^k-x_i^k|}{\e}\right)\frac{1}{|x_j^k-x_i^k|}}{\sum\limits_{l=1}^N m_l^k\xi\left(\frac{|x_j^k-x_i^k|}{\e}\right)} \quad \text{for} \,\, i \neq j, \,\, \omega_{ii}^k=0.
\end{equation}
In order to find the expression of a scheme that is implicit in the masses or in the linear operator, it only suffices to change $k$ into $k+1$ in $m_j^k$ in   \eqref{def:disc_motion1_exp2}, for the linear operators we  change  $k$ into $k+1$ in $\Pi_{ij}^k$ in \eqref{def:disc_motion1_exp1}. The following result stems from \cite[Proposition 5.4]{br}, as the reader may notice, the proof does not require anything on the masses, hence it is valid for both explicit and implicit definitions of the masses.

\begin{prop}[Sphere barrier to internal varifolds]\label{prop:comparison_intvar_disc}
 Let $V$ be a $d$-points cloud contained in a ball $B(z,R(0)), \, z \in \R^n$. Assume that $(V^k)_k$ is a sequence of points cloud varifolds solution to the equation \eqref{def:disc_motion1} for $\Pi=2 { \rm Id}$ and that $V^0:=V$. Set $t_k=k\tau$, then $\supp \|V^k\| \subset B(z,\sqrt{R(0)^2-2dt_k})$ for any $k\in N$ such that $R(0)^2 >2dt_k$.
\end{prop}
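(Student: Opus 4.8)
\medskip
\noindent\emph{Proof plan.} The plan is to transcribe at the discrete‑in‑time level the argument behind Proposition~\ref{prop:comparison_intvar} (= \cite[Prop.~5.2]{br}) together with the computation appearing in the proof of Proposition~\ref{prop:comparison_extvar}. After translating so that $z=0$, set $R_k := \max_{1\le i\le N}|x_i^k|$. It suffices to prove the one‑step estimate $R_{k+1}^2 \le R_k^2 - 2d\tau$: iterating it from $R_0 = R(0)$ gives $R_k^2 \le R(0)^2 - 2dt_k$, hence $\supp\|V^k\| \subset B(z,\sqrt{R(0)^2-2dt_k})$ for every $k$ with $R(0)^2 > 2dt_k$.

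First I would record the two elementary facts that drive the estimate and that, in particular, make it insensitive to whether the masses are taken explicit or implicit. Since $-s\rho'(s)=n\xi(s)\ge 0$ by the natural kernel pair condition~\eqref{nkpcondition}, one has $\rho'\le 0$, so the weights $\omega_{ij}$ entering $H_\e^{2\mathrm{Id}}(x_i^{k+1},V)$ are nonnegative; and exactly as in the chain of equalities in the proof of Proposition~\ref{prop:comparison_extvar}, the identity $-s\rho'(s)=n\xi(s)$ yields, for every fixed $i$,
\begin{equation*}
 \frac{1}{\e}\sum_{j=1}^N \omega_{ij}\,\bigl|x_j^{k+1}-x_i^{k+1}\bigr|^2 = d ,
\end{equation*}
where the $\omega_{ij}$ are formed from the positions $x_j^{k+1}$ entering the varifold $V$ of \eqref{def:disc_motion1}. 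This holds for an arbitrary configuration and arbitrary nonnegative masses (the mass factor cancels between numerator and denominator), which is why the proof does not see the explicit/implicit choice of the $m_i$'s.

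Next I would fix $k$, choose an index $i$ realizing $|x_i^{k+1}|=R_{k+1}$, and take the scalar product with $x_i^{k+1}$ of the scheme \eqref{def:disc_motion1} for $\Pi=2\mathrm{Id}$, written as $x_i^{k+1}=x_i^k+\tfrac{2\tau}{\e}\sum_j\omega_{ij}(x_j^{k+1}-x_i^{k+1})$. The evolution contribution is $\tfrac{\tau}{\e}\sum_j\omega_{ij}\,2(x_j^{k+1}-x_i^{k+1})\cdot x_i^{k+1}$, and the polarization identity $2(x_j^{k+1}-x_i^{k+1})\cdot x_i^{k+1}=|x_j^{k+1}|^2-|x_i^{k+1}|^2-|x_j^{k+1}-x_i^{k+1}|^2$, combined with the maximality of $i$ (so $|x_j^{k+1}|\le|x_i^{k+1}|$) and $\omega_{ij}\ge 0$, bounds it above by $-\tfrac{\tau}{\e}\sum_j\omega_{ij}|x_j^{k+1}-x_i^{k+1}|^2=-\tau d$. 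Hence $|x_i^{k+1}|^2+\tau d\le x_i^k\cdot x_i^{k+1}$, and Cauchy--Schwarz followed by Young's inequality ($x_i^k\cdot x_i^{k+1}\le\tfrac12|x_i^k|^2+\tfrac12|x_i^{k+1}|^2\le\tfrac12R_k^2+\tfrac12|x_i^{k+1}|^2$) gives $R_{k+1}^2=|x_i^{k+1}|^2\le R_k^2-2d\tau$, closing the induction.

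The computations above are routine; the only point that needs a word is that the scheme is implicit, so the $\omega_{ij}$ depend on the unknowns $X^{k+1}$. This is not an obstacle: one treats $(V^k)_k$ as a given solution and runs the estimate, using only that $\omega_{ij}\ge 0$ and that the kernel‑pair identity hold for any configuration; in particular the implicit nature of the scheme imposes no restriction on the time step $\tau$. As in \cite{br}, the points are assumed pairwise distinct so that all weights are well defined, the degenerate case being treated as there (note $\omega_{ij}\,|x_i^{k+1}-x_j^{k+1}|^2 = d\e\, m_j^k\,\xi(|x_i^{k+1}-x_j^{k+1}|/\e)\big/\sum_l m_l^k\,\xi(|x_l^{k+1}-x_i^{k+1}|/\e)\to 0$ as $x_j^{k+1}\to x_i^{k+1}$, since $\xi(0)=0$ by \eqref{nkpcondition}). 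I therefore expect no real difficulty: the proposition is the direct discrete‑in‑time counterpart of Proposition~\ref{prop:comparison_intvar}.
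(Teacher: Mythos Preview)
Your argument is correct and is exactly the approach the paper intends: the paper does not reprove this proposition but defers to \cite[Proposition~5.4]{br}, and your computation is precisely the discrete analogue of the proof of Proposition~\ref{prop:comparison_intvar}/\ref{prop:comparison_extvar} carried out there---pick an index realizing $R_{k+1}=\max_i|x_i^{k+1}|$, dot the scheme with $x_i^{k+1}$, use maximality together with $\omega_{ij}\ge 0$ to bound the interaction term by $-\tfrac{\tau}{\e}\sum_j\omega_{ij}|x_j^{k+1}-x_i^{k+1}|^2$, and close with the natural kernel pair identity~\eqref{nkpcondition} and Young's inequality. Your remark that the argument is insensitive to the explicit/implicit choice of masses is exactly the point the paper makes just before the statement; the only thing to be careful about is that the weights $\omega_{ij}$ must be built from the implicit positions $x^{k+1}$ (as dictated by \eqref{def:disc_motion1}, where $V$ is supported on $\{x_j^{k+1}\}$) so that the kernel pair cancellation goes through---you handle this correctly.
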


Though not interesting from a numerical point of view, as it lacks stability, we still introduce the explicit (in positions) scheme and highlight the comparison principle satisfied by this motion. We consider the following equation:

\begin{equation}\label{def:disc_motion2}
 x_i^{k+1}=x_i^{k}+\tau H_{\e}^{\Pi} \left(x_i^{k},V\right)
\end{equation}
with 
\begin{equation}
V=\sum\limits_{j=1}^{N}m_i^{k} \delta_{(x_i^{k},P_i^k)} \quad \text{or} \quad V=\sum\limits_{j=1}^{N}m_{i}^{k+1} \delta_{(x_i^{k},P_i^k)}.
\end{equation}
\noindent The evolution equation when the positions, the masses and the linear operators are explicit can be written as follows: for $i \in \{ 1, \dots, N \}$
\begin{equation}\label{def:disc_motion2_exp1}
 x_i^{k+1} = x_i^k + \frac{\tau}{\e} \sum_{j=1}^{N} \omega_{ij}^{k} \Pi_{ij}^k\left( x_j^{k} - x_i^{k} \right)
\end{equation}
with 
\begin{equation}\label{def:disc_motion2_exp2}
\omega_{ij}^k=-\frac{d}{n}\frac{m_j^k\rho'\left(\frac{|x_j^k-x_i^k|}{\e}\right)\frac{1}{|x_j^k-x_i^k|}}{\sum\limits_{l=1}^N m_l^k\xi\left(\frac{|x_j^k-x_i^k|}{\e}\right)} \quad \text{for} \,\, i \neq j, \,\, \omega_{ii}^k=0.
\end{equation}
Similarly to the implicit scheme (in positions) we can also make the masses and the linear operators implicit. The following result is a weak sphere barrier to external varifolds principle, in the sense that it holds for a small time step $\tau$ (depending on the distances between the points and the center of the sphere). Concretely:

\begin{prop}[Weak external varifold comparison principle]\label{prop:comparison_extvar_disc}

Assume that $(V^k)_k$ is a sequence of points clouds varifolds solution to the equation \eqref{def:disc_motion2} for $\Pi=2{ \rm Id}$ and that $V^0:=V$.

\noindent Let $z\in \R^n$, $\tau>0$; let $p \in  \{ 1, \dots, N \}$ be such that 
$$  |x^{k}_p-z|=\min\limits_{i\in  \{ 1, \dots, N\}} \abs{x^{k}_{i}-z}.$$ 
We have
$$\abs{x^{k}_{p}-z}^2 \leq \abs{x_{p}^{k+1}-z}^2 + 2d\tau.$$
Moreover, if we choose $\tau$  small enough such that 
\begin{equation}
 \exists p\in \lbrace 1, \ldots ,N \rbrace \,\,  \text{such that} \,\, |x^{k}_p-z|=\min\limits_{i\in  \{ 1, \dots, N\} }\abs{x^k_i-z}\,\,  \text{and} \,\,  |x^{k+1}_p-z|=\min\limits_{i\in  \{ 1, \dots, N\} }\abs{x^{k+1}_i-z};
\end{equation}
then, 
$$ \supp \|V^k\| \subset B(a,R)^c \,\, \implies \, \, \supp \|V^{k+1}\| \subset B(a,\sqrt{R^2-2d\tau})^c \, \, \forall (a,R) \in \R^n\times \R^+.$$
\end{prop}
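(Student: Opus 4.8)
The plan is to mimic the structure of the proof of Proposition~\ref{prop:comparison_extvar}, but in the discrete setting, tracking the nearest point to $z$ across one time step. Without loss of generality assume $z=0$. The starting point is the explicit update rule \eqref{def:disc_motion2_exp1} with $\Pi = 2{\rm Id}$, i.e.
\begin{equation*}
 x_i^{k+1} = x_i^k + \frac{2\tau}{\e} \sum_{j=1}^{N} \omega_{ij}^{k}\left( x_j^{k} - x_i^{k} \right),
\end{equation*}
where the weights $\omega_{ij}^k$ are given by \eqref{def:disc_motion2_exp2} and satisfy $\omega_{ij}^k \ge 0$ for the relevant (attractive) sign of $\rho'$. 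First I would pick $p$ realizing $|x_p^k| = \min_i |x_i^k|$ and compute $|x_p^{k+1}|^2$ by expanding the square:
\begin{equation*}
 |x_p^{k+1}|^2 = |x_p^k|^2 + \frac{4\tau}{\e}\sum_{j=1}^N \omega_{pj}^k (x_j^k - x_p^k)\cdot x_p^k + \frac{4\tau^2}{\e^2}\Big|\sum_{j=1}^N \omega_{pj}^k(x_j^k - x_p^k)\Big|^2.
\end{equation*}

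For the linear (first-order) term, the elementary inequality already used at the end of the proof of Proposition~\ref{prop:comparison_extvar} gives $(x_j^k - x_p^k)\cdot x_p^k \ge -\tfrac12|x_p^k - x_j^k|^2$ (this is the case $\Pi = 2{\rm Id}$, $c(t)\le 1$ computation), hence
\begin{equation*}
 \frac{4\tau}{\e}\sum_{j=1}^N \omega_{pj}^k (x_j^k - x_p^k)\cdot x_p^k \ge -\frac{2\tau}{\e}\sum_{j=1}^N \omega_{pj}^k|x_p^k - x_j^k|^2 = -2d\tau,
\end{equation*}
where the last equality uses the natural kernel pair identity \eqref{nkpcondition} exactly as in the continuous proof (the sum telescopes into $d$ via $-s\rho'(s) = n\xi(s)$ and $\tfrac{C_\rho}{C_\xi} = -\tfrac{d}{n}$). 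Since the quadratic term is manifestly nonnegative, we get $|x_p^{k+1}|^2 \ge |x_p^k|^2 - 2d\tau$, which is the first displayed inequality of the proposition (after undoing the translation by $z$).

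Now comes the passage to the actual barrier conclusion, and this is where the smallness of $\tau$ enters. The quantity $|x_p^{k+1}|$ controls only the position of the \emph{same} index $p$ at time $k+1$, not $\min_i |x_i^{k+1}|$. Under the stated hypothesis on $\tau$ — namely that one may choose $p$ simultaneously minimizing $|x_i^k|$ and $|x_i^{k+1}|$ — we have $\min_i|x_i^{k+1}| = |x_p^{k+1}|$ and $\min_i|x_i^k| = |x_p^k|$, so the first inequality directly yields $(\min_i|x_i^{k+1}|)^2 \ge (\min_i|x_i^k|)^2 - 2d\tau \ge R^2 - 2d\tau$ whenever $\supp\|V^k\| \subset B(a,R)^c$, i.e. $\min_i|x_i^k|\ge R$. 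This gives $\supp\|V^{k+1}\| \subset B(a,\sqrt{R^2-2d\tau})^c$.

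I expect the main (only real) obstacle to be conceptual rather than computational: making precise why the barrier statement genuinely requires the extra hypothesis on $\tau$, and noting that without it the index achieving the minimum can jump, so that a point which was \emph{not} closest at time $k$ could become closest at time $k+1$ while having moved inward more than $\sqrt{R^2}-\sqrt{R^2-2d\tau}$ permits. Everything else is a one-step repetition of the continuous argument with the harmless nonnegative quadratic remainder $\tfrac{4\tau^2}{\e^2}|\sum_j \omega_{pj}^k(x_j^k-x_p^k)|^2$ simply discarded; in particular no stability or convergence of the scheme is needed, and the argument is insensitive to whether the masses (or projectors) are explicit or implicit, exactly as remarked before Proposition~\ref{prop:comparison_intvar_disc}.
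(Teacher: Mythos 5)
Your proof is correct and follows essentially the same route as the paper: the elementary inequality $(x_j^k-x_p^k)\cdot x_p^k\geq -\tfrac12|x_p^k-x_j^k|^2$ (valid because $p$ minimizes $|x_i^k|$), the natural-kernel-pair identity collapsing the weighted sum to $d$, and the extra hypothesis on $\tau$ to keep the minimizing index coherent across the step. The only cosmetic difference is that you expand $|x_p^{k+1}|^2$ and discard the nonnegative quadratic remainder, whereas the paper isolates $|x_p^k|^2$ and uses Cauchy--Schwarz plus Young's inequality; both yield the identical one-step bound $|x_p^k|^2\leq|x_p^{k+1}|^2+2d\tau$.
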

\begin{proof}
Without loss of generality we assume that $z=0$. Fix $k\in \N$, let $\tau >0$ and $p$ be such that $ x^{k}_p=\min\limits_{i\in  \{ 1, \dots, N\}} \abs{x^{k}_{p}}$. For any $j\in \{ 1, \dots, N \}$ we have 
$$ (x_p^k-x_j^k) \cdot (x_p^k) = \abs{x_p^k}^2-x_j^k \cdot x_p^k \leq
\frac{1}{2}\abs{x_p^k}^2 -x_j^k \cdot x_p^k +\frac{1}{2}\abs{x_j^k}^2 = \frac{1}{2} \abs{x_p^k-x_j^k}^2.$$
\noindent So that,
\begin{equation*}
    \begin{split}
        \abs{x^{k}_{p}}^2  &= x^{k+1}_{p}\cdot x_{p}^{k} - \frac{\tau}{\e}\sum\limits_{j=1}^{N}\omega_{pj}^{k} 2\left( x_j^{k}-x_p^{k}\right) \cdot x_{p}^{k} \\&
        \leq \abs{x_{p}^k}\abs{x_{p}^{k+1}} + \frac{\tau}{\e}\sum\limits_{j=1}^{N}\omega_{pj}^{k} 2\left( x_p^{k}-x_j^{k}\right) \cdot x_{p}^{k} \\&
        \leq \abs{x_{p}^k}\abs{x_{p}^{k+1}} + \frac{\tau}{\e}\sum\limits_{j=1}^{N}\omega_{pj}^{k} \abs{x_p^k-x_j^k}^2 \\& 
        \leq \frac12 \abs{x_{p}^k}^2 + \frac12\abs{x_{p}^{k+1}}^2 + d\tau \quad \text{using \eqref{nkpcondition}}.
    \end{split}
\end{equation*}
therefore $\abs{x^{k}_{p}}^2 \leq \abs{x_{p}^{k+1}}^2 + 2d\tau$.\\
Now choose $\tau$  small enough such that 
\begin{equation}
 \exists p\in \lbrace 1, \ldots ,N \rbrace \,\,  \text{such that} \,\, |x^{k}_p|=\min\limits_{i\in  \{ 1, \dots, N\} }\abs{x^k_i}\,\,  \text{and} \,\,  |x^{k+1}_p|=\min\limits_{i\in  \{ 1, \dots, N\} }\abs{x^{k+1}_i}.
\end{equation}
If we assume  $\abs{x^{k}_{p}}>R$ then 
\begin{equation*}
 \min\limits_{i \in \{ 1, \dots, N \}} |x_i^{k+1}|^2 = |x_p^{k+1}|^2> R^2-2d\tau,
\end{equation*}
and this concludes the proof.
\end{proof}
%

\newpage
\section{Appendix}
For the sake of completeness we include in this appendix some of the results needed to prove the results of the current chapter.
\noindent We state a simple version of Brakke theorem on the tilt of tangent planes and the graph of varifolds.
\begin{theo}[Theorem 5.7, \cite{menne08}]
\label{thm:tilttex-heightex}
Let $V \in V_d(\R^n)$ be a integral varifold with $H(\cdot,V) \in \L^1(\V)$ and $(\delta V)_s \equiv 0$, assume that $0 \in \supp\|V\|$. Then
\begin{equation}
\int_{B_{\e}} \|T_yV-T_0V\|^2 \, d\|V\|(y) = o(\e^{d+1}), \quad \text{and} \,\, \int_{B_{\e}} |y-T_yV(y)|^2 \, d\|V\|(y) = o(\e^{d+3}).
\end{equation}

\end{theo}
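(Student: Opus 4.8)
This is a deep decay estimate for integral varifolds, so we shall only outline the classical route (see \cite[Ch.~5]{brakke} and \cite{menne08}; the sharp $o$-rates are due to the latter, via the Michael--Simon isoperimetric inequality). Translate so that the base point is $0$. For a $d$-plane $Q$ write $p_Q$, $q_Q:=\mathrm{Id}-p_Q$ for the orthogonal projections onto $Q$ and onto $Q^\perp$, and abbreviate $T:=T_0V$, $\|T_yV-Q\|:=\|p_{T_yV}-p_Q\|$. Two preliminary facts are used throughout. First, since $V$ is integral with $(\delta V)_s\equiv0$ and $H(\cdot,V)\in\L^1(\V)$, the monotonicity formula gives that the density $\Theta$ of $\V$ at $0$ exists, is a positive integer, and $\V(B_\rho)=\omega_d\Theta\rho^d+o(\rho^d)$. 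Second, since $V$ is rectifiable, at $\V$-a.e.\ base point --- which is the sense in which $T_0V$ must be read in the statement --- the rescalings of $V$ converge to the multiplicity-$\Theta$ flat plane $T$; in particular the normalized height-excess $E(\rho):=\rho^{-d-2}\inf_{P}\int_{B_\rho}\dist(y,P)^2\,d\V$ (infimum over $d$-planes $P\ni0$) satisfies $E(\rho)\to0$ as $\rho\to0$.

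\medskip\noindent\textbf{Step 1: a reverse Poincar\'e inequality.} First I would show that the tilt-excess is controlled by the height-excess. Fixing $\gamma\in C^1_c(B_{2\rho})$ with $\gamma\equiv1$ on $B_\rho$, $|\nabla\gamma|\le2/\rho$, and a $d$-plane $Q$, one tests the first-variation identity $\int\mdiv_V X\,d\V=-\int H(\cdot,V)\cdot X\,d\V$ against $X(y):=\gamma(y)^2 q_Q(y)$. Computing $\mdiv_V X=2\gamma\langle\nabla^V\gamma,q_Qy\rangle+\gamma^2\sum_i|q_Qe_i|^2$ for an orthonormal basis $\{e_i\}$ of $T_yV$, invoking the algebraic identity $\sum_i|q_Qe_i|^2=\tfrac12\|T_yV-Q\|^2$, and using Young's inequality to absorb the resulting tilt terms, one obtains
\begin{equation}\label{eq:sketch-caccio}
\int_{B_\rho}\|T_yV-Q\|^2\,d\V\ \lesssim\ \rho^{-2}\!\int_{B_{2\rho}}\!\dist(y,Q)^2\,d\V\ +\ \Big(\sup_{B_{2\rho}\cap\supp\V}\dist(\cdot,Q)\Big)\!\int_{B_{2\rho}}\!|H(\cdot,V)|\,d\V .
\end{equation}

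\medskip\noindent\textbf{Step 2: height-excess decay.} Next I would upgrade $E(\rho)\to0$ to a rate. Applying \eqref{eq:sketch-caccio} with $Q=T_\rho$ a near-optimal plane at scale $\rho$, together with a Poincar\'e bound on $\sup_{B_\rho}\dist(\cdot,T_\rho)$ and the flatness of the blow-up, should give a decay inequality of the form
\begin{equation}\label{eq:sketch-decay}
E(\theta\rho)\ \le\ C\theta^2\,E(\rho)\ +\ C\theta^{-d-2}\Big(\rho^{\,1-d}\!\!\int_{B_\rho}\!|H(\cdot,V)|\,d\V\Big)^2 ,\qquad 0<\theta<1 .
\end{equation}
Since $\rho^{1-d}\int_{B_\rho}|H(\cdot,V)|\,d\V=O(\rho)$ by Lebesgue differentiation and the preliminaries, choosing $\theta$ small enough that $C\theta^2<\theta$ and iterating \eqref{eq:sketch-decay} over dyadic scales yields $E(\rho)=O(\rho^{\beta})$ for some $\beta>1$; summing the geometrically decaying tilts of the near-optimal planes then gives $\|T_\rho-T\|=O(\rho^{\beta/2})$, so that
\[
\int_{B_\e}|q_Ty|^2\,d\V\ \lesssim\ \int_{B_\e}\dist(y,T_\e)^2\,d\V+\|T_\e-T\|^2\,\e^2\,\V(B_\e)\ =\ O(\e^{d+2+\beta})\ =\ o(\e^{d+3}) .
\]

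\medskip\noindent\textbf{Step 3: conclusion, and the main difficulty.} The standard ``Lipschitz/height approximation'' consequence of the estimates just obtained is a pointwise height bound $\sup_{B_\e\cap\supp\V}\dist(\cdot,T)=o(\e)$; using this in the last term of \eqref{eq:sketch-caccio} taken with $Q=T$ makes that term $o(\e)\cdot O(\e^d)=o(\e^{d+1})$, and \eqref{eq:sketch-caccio} together with $\int_{B_\e}|q_Ty|^2\,d\V=o(\e^{d+3})$ then gives the first estimate $\int_{B_\e}\|T_yV-T\|^2\,d\V=o(\e^{d+1})$. Finally $|y-T_yV(y)|^2\lesssim|q_Ty|^2+\|T_yV-T\|^2|y|^2$ on $B_\e$, so integrating and using what precedes yields $\int_{B_\e}|y-T_yV(y)|^2\,d\V=o(\e^{d+3})+\e^2\,o(\e^{d+1})=o(\e^{d+3})$. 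The hard part is \eqref{eq:sketch-decay}: extracting the quadratic gain $\theta^2$ from the flatness of the blow-up, and --- with only the hypothesis $H(\cdot,V)\in\L^1(\V)$ at our disposal --- controlling the mean-curvature error carefully enough that, after summing the geometric series, the iteration delivers the exponent $\beta>1$ rather than merely $E(\rho)\to0$; this is precisely where Menne's isoperimetric argument (and the $o$-refined monotonicity formula) enters. A secondary subtlety, already visible in the preliminaries, is that at a general point of $\supp\V$ one must first know that the approximate tangent plane exists and is unique for $T_0V$ to be meaningful --- which holds $\V$-a.e.\ by rectifiability, the only regime in which the theorem is applied in this paper.
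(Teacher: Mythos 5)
The paper does not prove this statement: it is quoted in the appendix directly as Theorem~5.7 of \cite{menne08}, and the appendix opens with ``For the sake of completeness we include\dots some of the results needed,'' i.e., these are imported results, not proved here. So there is no in-paper argument to compare yours against; what you have written is a (blind) reconstruction of Menne's and Brakke's argument.

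As a reconstruction of the cited proof, your sketch is a reasonable and mostly faithful outline of the classical route: translate to the base point, use the first-variation identity tested against $\gamma^2 q_Q(\cdot)$ to get a reverse-Poincar\'e (Caccioppoli) inequality linking tilt- and height-excess, run an excess-decay iteration based on harmonic approximation, and then upgrade from the $\inf$-over-planes excess to the excess measured relative to $T_0V$ and finally relative to the variable plane $T_yV$ via the elementary inequality $|q_{T_yV}y|^2 \lesssim |q_T y|^2 + \|T_yV-T\|^2|y|^2$. The algebraic identity $\sum_i|q_Q e_i|^2 = \tfrac12\|p_{T_yV}-p_Q\|^2$ and the splitting in Step~3 are correct, and you correctly flag that at a general point of $\supp\|V\|$ the approximate tangent plane need not exist, so the statement must be read $\|V\|$-a.e.\ (which is exactly the ``generic point'' regime in which the paper applies it).

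Two caveats. First, your claim in Step~2 that the iteration yields $E(\rho)=O(\rho^\beta)$ for some $\beta>1$ is more than the theorem asserts ($o$-rates) and, as you yourself note, is exactly the point where a careless use of the excess-decay lemma would fail with only $H\in\L^1(\V)$: you are invoking a pointwise Lebesgue estimate $\rho^{1-d}\!\int_{B_\rho}|H|\,d\V=O(\rho)$ to feed the iteration, which produces non-uniform constants and requires the harmonic-approximation/compactness step to hold under only an $L^1$ bound on the mean curvature at the given scale. This is precisely where Menne's isoperimetric argument and the refined monotonicity formula do the actual work, and your sketch correctly identifies that as the deep part rather than claiming to have resolved it. Second, a notational caution: the second quantity in the theorem is $|y - T_yV(y)|=|q_{T_yV}(y)|$ with the \emph{variable} plane $T_yV$, while the Caccioppoli/decay machinery naturally controls heights over a \emph{fixed} plane; your Step~3 does handle this conversion correctly, but it is easy to conflate the two (the paper itself uses the fixed-plane variant $|T^\perp(y)|$ in Lemma~2.4), so it deserves the explicit treatment you give it.
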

The following theorem allows the use of parametrizations in Lemma \ref{lemma1_prop3.3br} and the proofs of Proposition \ref{propbr3.3_gen1} and \ref{propbr3.3_gen2}. We state a qualitative version, the original version can be found in \cite[Chapter 8]{all} 
\begin{theo}[Allard regularity theorem]\label{thm:allard_reg}
Let $V \in V_d(\R^n)$ be a integral unit density varifold with $H(\cdot,V)\in \cL^p(\V), p > d$ and $(\delta V)_s \equiv 0$, assume that $0$ is a generic point of  $ \supp\|V\|$. Then, near $0$, $\supp \|V\|$ is the graph of a map $ f \in C^{1,\alpha}(T_0V,\R^{n-d})$, $\alpha = 1-\frac{d}{p}$, $f(0) = 0_{\R^{n-d}}$ and $Df(0)= 0_{\R^{n-d}\times \R^d}$.
\end{theo}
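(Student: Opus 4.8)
The plan is to deduce this qualitative statement from the quantitative $\varepsilon$-regularity theorem of Allard, whose proof I would organise in four stages.

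\textbf{Stage 1 (monotonicity and reduction).} Since $(\delta V)_s\equiv 0$ we have $\delta V=-H(\cdot,V)\,\|V\|$ with $|H(\cdot,V)|\in\cL^p(\|V\|)$, $p>d$; the first step is the monotonicity formula, namely that $r\mapsto e^{c\,r^{1-d/p}}\,\|V\|(B_r(x))/(\omega_d r^d)$ is nondecreasing for every $x\in\supp\|V\|$, with $c=c(d,p)\,\|H(\cdot,V)\|_{\cL^p}$. This guarantees that the density $\Theta^d(\|V\|,x)$ exists at every point of the support; at a generic point $0$ (one where $\Theta^d(\|V\|,0)=1$ and the approximate tangent plane $T_0V$ exists) one has $\|V\|(B_r(0))/(\omega_d r^d)\to 1$ as $r\to0$, while $r^{1-d/p}\big(\int_{B_r(0)}|H(\cdot,V)|^p\,d\|V\|\big)^{1/p}\le r^{1-d/p}\|H(\cdot,V)\|_{\cL^p}\to0$ because $p>d$. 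Hence at a sufficiently small scale $\rho$ the hypotheses $\|V\|(B_\rho(0))\le(1+\varepsilon)\omega_d\rho^d$ and $\rho^{1-d/p}\|H(\cdot,V)\|_{\cL^p(B_\rho(0))}\le\varepsilon$ of Allard's $\varepsilon$-regularity theorem hold, and its conclusion is precisely the claimed $C^{1,\alpha}$ graph property with $\alpha=1-\tfrac{d}{p}$.

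\textbf{Stage 2 (excess and Caccioppoli).} For an affine $d$-plane $L$ put
\[
 E(x,r,L):=r^{-d-2}\int_{B_r(x)}\dist(y-x,L)^2\,d\|V\|(y),\qquad
 \widetilde E(x,r,L):=r^{-d}\int_{B_r(x)}\|T_yV-L\|^2\,d\|V\|(y).
\]
Testing the first variation identity against vector fields of the form $y\mapsto\phi(y)^2\big((y-x)-\pi_L(y-x)\big)$ with $\phi$ a cut-off yields the Caccioppoli-type bound
\[
 \widetilde E(x,r/2,L)\ \lesssim\ E(x,r,L)+r^{2(1-d/p)}\,\|H(\cdot,V)\|_{\cL^p}^2 .
\]
The key statement is the \emph{excess improvement lemma}: there are $\theta\in(0,1)$ and $\varepsilon_0>0$ depending only on $n,d,p$ such that, if the scaled mass on $B_r(x)$ is close to $1$ and $E(x,r,L)\le\varepsilon_0$, then there is an affine plane $L'$ with
\[
 E(x,\theta r,L')\ \le\ \tfrac12\,\theta^{2\alpha}\,E(x,r,L)+C\,r^{2\alpha}\,\|H(\cdot,V)\|_{\cL^p}^2 .
\]

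\textbf{Stage 3 (blow-up).} I would prove the excess improvement lemma by contradiction and compactness. If it fails, take varifolds $V_k$ violating it with height excesses $\varepsilon_k\to0$; after rotating so that the optimal plane is $\R^d\times\{0\}$ and rescaling the normal coordinates by $\varepsilon_k^{-1/2}$, the varifolds are, off a ``bad set'' of $\|V_k\|$-measure $o(\varepsilon_k)$ (controlled via the monotonicity formula and a Besicovitch covering), graphs of Lipschitz functions with small Lipschitz constant, and the normalised height functions $w_k$ satisfy uniform $W^{1,2}$ bounds by the Caccioppoli inequality, hence converge (up to a subsequence) weakly in $W^{1,2}$ and strongly in $L^2$ to some $w$. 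Passing the first variation identity to the limit, the mean-curvature contribution (normalised by $\varepsilon_k$) tends to $0$ since $p>d$ and the quadratic tilt term is of lower order, so the equation linearises to $\Delta w=0$. Harmonic functions obey the sharp quadratic decay $|B_\theta|^{-1}\int_{B_\theta}|w-(w(0)+Dw(0)\cdot y)|^2\le C\theta^2\,|B_1|^{-1}\int_{B_1}|w|^2$, and for $\theta$ fixed small this contradicts the failure of the excess improvement for $k$ large. This blow-up step is the main obstacle: obtaining the uniform energy estimate for the $w_k$ and correctly identifying the limit as harmonic require careful control of the bad set and of the tilt terms in the first variation.

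\textbf{Stage 4 (iteration and conclusion).} From the $\varepsilon$-regularity hypotheses at scale $\rho$, the excess improvement lemma applies at every $x\in\supp\|V\|\cap B_{\gamma\rho}(0)$ and at all dyadic scales, giving $E(x,r,L_x)\lesssim (r/\rho)^{2\alpha}\big(E(0,\rho,L)+\|H(\cdot,V)\|_{\cL^p}^2\big)$, where $L_x:=\lim_{r\to0}L_{x,r}$ is the tangent plane at $x$, together with $\|L_x-L_y\|\lesssim |x-y|^\alpha$. Since in addition the density stays close to $1$, a standard Lipschitz-approximation argument (again using the monotonicity formula, which for unit density rules out folding and higher multiplicity) shows that $\supp\|V\|\cap B_{\gamma\rho}(0)$ is the graph of a single Lipschitz function $f$; the Hölder continuity of $x\mapsto L_x$ identifies $Df(x)$ with the slope of $L_x$ and upgrades $f$ to $C^{1,\alpha}$. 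Choosing coordinates so that $T_0V=\R^d\times\{0\}$ gives $f\in C^{1,\alpha}(T_0V,\R^{n-d})$ with $f(0)=0$ and $Df(0)=0$, which is the assertion.
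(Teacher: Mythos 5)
The paper does not actually prove this theorem: Theorem~\ref{thm:allard_reg} is stated in the appendix as a known result, with an explicit pointer to Allard's original work (\cite[Chapter~8]{all}), and the text says it is only a ``qualitative version'' recorded for the reader's convenience. There is therefore no in-paper proof to compare against.

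On its own merits, your outline is a faithful reconstruction of the standard proof of Allard's $\varepsilon$-regularity theorem (monotonicity formula for the density ratio under an $\cL^p$ mean-curvature bound, Caccioppoli/tilt-height excess inequality, excess improvement via blow-up and harmonic approximation, iteration to Hölder decay of the tilt and a Lipschitz graph). The reduction in Stage~1 --- that at a generic point of a unit-density integral varifold with $(\delta V)_s\equiv 0$ and $H\in\cL^p$, $p>d$, the small-excess and small-scaled-$\cL^p$-curvature hypotheses of the $\varepsilon$-regularity theorem hold at some small scale --- is exactly the step that turns the quantitative statement into the qualitative one cited here, and you have it right. Two small imprecisions worth flagging: in your excess improvement lemma the exponent $\alpha$ should not appear in the hypothesis side (the harmonic decay gives a factor $C\theta^2$, and $\alpha$ is chosen a posteriori so that $C\theta^2\le\theta^{2\alpha}$ for a fixed small $\theta$); and in Stage~4 the conclusion that the support is a \emph{single-valued} graph relies not only on near-unit density but on the clearing-out (no-gaps) argument, which deserves an explicit mention. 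Neither is a gap in the sense of a wrong idea, just standard bookkeeping that the full proof would have to supply.
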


\noindent We exhibit a key lemma on polynomial approximations that allows to prove Proposition \ref{propbr3.3_gen2}, it is a corollary of \cite[Proposition 4.1]{schatzle}.
\begin{lemma}[Polynomial approximation] \label{poly_approx}
Let $V$ be a $(n-1)$-integral unit density varifold in $\R^{n}$, such that $(\delta V)_s \equiv 0$, $H(\cdot,V)\in \cL^p(\V)$ with $p > n-1$; assume that $0$ is a generic point of  $ \supp\|V\|$. Then, there exists a polynomial map $q$ of degree at most $2$ such that : 
%
%
\begin{equation}
 \sup\limits_{\vert z \vert \leq \e } \vert q(z)- f(z) \vert = o(\e^2)
\end{equation}
where $z (\in T_0V) \mapsto (z,f(z))$ is a parametrization of $\supp \V$ near $0$ (which exists thanks to Allard's regularity).
\end{lemma}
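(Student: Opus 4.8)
The plan is to deduce the statement from \cite[Proposition~4.1]{schatzle} once the graph picture is set up. First I would apply Theorem~\ref{thm:allard_reg}: since $V$ is an $(n-1)$-integral unit density varifold with $(\delta V)_s\equiv 0$ and $H(\cdot,V)\in\cL^p(\V)$, $p>n-1=d$, and $0$ is a generic point of $\supp\|V\|$, there exist $\alpha=1-\tfrac{d}{p}\in(0,1)$ and $f\in C^{1,\alpha}(T_0V,\R)$ with $f(0)=0$, $Df(0)=0$ such that $\supp\|V\|$ coincides near $0$ with the graph of $f$ over $T:=T_0V$, and $V$ is, in that neighbourhood, the multiplicity-one varifold carried by this graph. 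In particular the generalized mean curvature of $V$ translates (exactly as in the proof of Proposition~\ref{propbr3.3_gen2}) into the prescribed-mean-curvature equation
\[
\mdiv\!\left(\frac{\nabla f}{(1+|\nabla f|^2)^{1/2}}\right) \;=\; -\,g \qquad\text{on a neighbourhood of }0\text{ in }T,
\]
where $g$ is $H(\cdot,V)$ pulled back through the parametrization (times the usual Jacobian factor), so $g\in\cL^p$ with $p>d$.

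Next I would feed this into \cite[Proposition~4.1]{schatzle}, whose hypotheses are precisely those just verified: an integral $d$-varifold with generalized mean curvature in $\cL^p$, $p>d$. Its conclusion is the quadratic tilt-excess decay at the point $0$, which in graphical form produces a polynomial $q:T\to\R$ of degree at most $2$ with
\[
\sup_{|z|\le\e}|f(z)-q(z)| \;=\; o(\e^2) \qquad (\e\to 0).
\]
Letting $\e\to 0$ in this bound and using $f(0)=0$, $Df(0)=0$ together with $|f(z)|=O(|z|^{1+\alpha})$ forces $q(0)=0$ and $\nabla q(0)=0$, so the tangent plane at $0$ of the graph of $q$ is again $T=T_0V$; this is exactly the statement of the lemma.

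The point requiring the most care is that \cite[Proposition~4.1]{schatzle} is a $\|V\|$-almost-everywhere statement, whereas we need it at the single point $0$: one must check that ``$0$ generic'' — density one, existence of the approximate tangent plane $T_0V$, the $C^{1,\alpha}$ graph representation furnished by Allard, and $0$ being a Lebesgue point of the second-order data — supplies exactly the pointwise input that reference requires, and that the threshold $p>n-1$ is the correct one for quadratic decay in codimension one. A consistency check comes from Theorem~\ref{thm:tilttex-heightex}: the estimates $\int_{B_\e}\|T_yV-T_0V\|^2\,d\V=o(\e^{d+1})$ and $\int_{B_\e}|y-T_yV(y)|^2\,d\V=o(\e^{d+3})$ are the $\cL^2$ shadow of the desired uniform bound, and upgrading this averaged control to the sup bound is precisely the elliptic (De Giorgi--Nash--Moser type) argument carried out in \cite{schatzle}, so no estimate beyond that reference is needed.
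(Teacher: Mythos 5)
Your proposal matches the paper's intended route: the paper gives no proof and simply states that the lemma ``is a corollary of \cite[Proposition 4.1]{schatzle}'', and your argument fleshes out exactly that citation — Allard regularity to get the $C^{1,\alpha}$ graph $f$ over $T_0V$, then Sch\"atzle's pointwise second-order expansion at the ($\|V\|$-generic) point $0$ to produce the degree-$\le 2$ polynomial $q$ with $\sup_{|z|\le\e}|f-q|=o(\e^2)$. The two supplementary observations you make — that ``$0$ generic'' is precisely what converts Sch\"atzle's a.e.\ statement into a pointwise one, and that $f(0)=0$, $Df(0)=0$ together with the $o(\e^2)$ bound force $q(0)=0$ and $\nabla q(0)=0$ — are both correct and are implicitly used by the paper (the second is invoked in the proof of Proposition~\ref{propbr3.3_gen2}); your caution about whether $p>n-1$ alone suffices for Sch\"atzle's hypotheses (he also needs $p\ge 2$, automatic once $n\ge 3$) is a legitimate point the paper glosses over, though it is harmless where the lemma is actually applied since there $p>2(n-1)\ge 2$.
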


\section*{Acknowledgments}
I would like to express my sincere gratitude to my thesis supervisors, Blanche Buet, Gian-Paolo Leonardi, and Simon Masnou, for their insightful suggestions and support throughout this work. I am also deeply thankful to Professor Abdelghani Zeghib, whose teaching made me fall in love with mathematics.

\newpage
\bibliographystyle{abbrv}
\bibliography{these}
\end{document}